\documentclass[11pt,english,oneside]{amsart}
\usepackage[T1]{fontenc}
\usepackage[latin9]{inputenc}
\usepackage{geometry}
\geometry{verbose,letterpaper,tmargin=1in,bmargin=1in,lmargin=1in,rmargin=1in}
\setlength{\parskip}{\medskipamount}
\setlength{\parindent}{0pt}
\usepackage{amssymb}
\usepackage{color}

\usepackage{graphicx,color}
\usepackage{amsmath, amssymb, graphics}

\usepackage{graphicx}

\makeatletter
\numberwithin{equation}{section} 
\numberwithin{figure}{section} 
  \@ifundefined{theoremstyle}{\usepackage{amsthm}}{}
  \theoremstyle{plain}
  \newtheorem{thm}{Theorem}[section]
  \theoremstyle{plain}
  \newtheorem{cor}[thm]{Corollary}
  \theoremstyle{plain}
  
  \theoremstyle{remark}
  \newtheorem{rem}[thm]{Remark}
  \theoremstyle{remark}
  
  \theoremstyle{plain}
  \newtheorem{lem}[thm]{Lemma}
   \newtheorem{mydef}{Definition}
   \newtheorem{example}{Example}


\usepackage{geometry}



\def\bfR#1{{\bf R}^#1}

\def\com#1{ \hbox{#1}}

\def\e{\hbox{\rm e}}

\smallskip
\def\<{{\langle }}
\def\>{{\rangle }}


\makeatother

\usepackage{babel}


\def\bfR#1{{\bf R}^#1}

\def\com#1{ \quad\hbox{#1}\quad}

\def\e{\hbox{\rm e}}

\smallskip
\def\<{{\langle }}
\def\>{{\rangle }}

\makeatother

\begin{document}

\title{Constant-speed ramps}

\author{ Oscar M. Perdomo }

\date{\today}

\curraddr{Department of Mathematics\\
Central Connecticut State University\\
New Britain, CT 06050\\}

\email{ perdomoosm@ccsu.edu}

\begin{abstract}

It is easy to show that if the kinetic coefficient of friction between a block and a ramp is  $\mu_k$ and this ramp is a straight line with slope $-\mu_k$ then, this block will move along the ramp with constant speed. A natural question to ask is the following: Besides straight lines, are there more shapes of ramps such that a block will go down on the ramp with constant speed. Here we classify all the possible shape of these ramps and, {\bf surprisingly } we will show that the planar ramps can be parametrized in term of elementary functions: trigonometric function, exponential functions and their inverses. They provide basic examples of explicitly parametrized arc-length parameter curves. A video explaining the main results in this paper can be found in one of the following URLs:

 http://www.youtube.com/watch?v=iBrvbb0efVk  

http://www.ccsu.edu/cf\_media/index.cfm?g=871

\end{abstract}

\maketitle

\section{Introduction}

A basic computation shows that given a kinetic coefficient of friction $\mu_k$, the angle of a linear ramp can be adjusted so that a block on this ramp will move down with constant speed. In order to cancel the gravity force, the friction force and the normal force, it is enough to take a ramp given by a line with slope $-\mu_k$, this is, we must take the inclination of the ramp to be $\theta_0$ with $\tan(\theta_0)=\mu_k$.  See Figure \ref{line}

\begin{figure}[ht]
\centerline{\includegraphics[width=8cm,height=5cm]{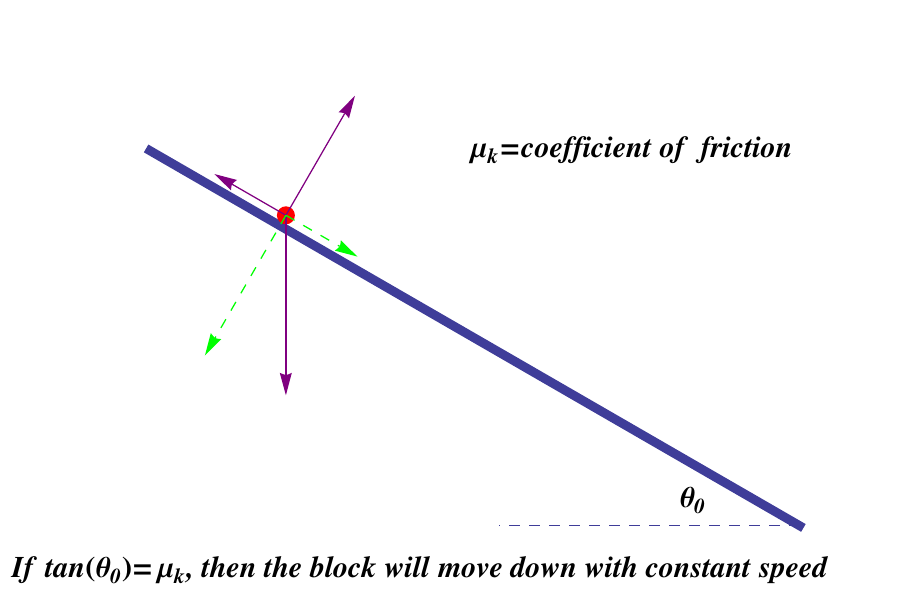}}
\caption{A block sliding down under the effect of gravity and the friction force. This figure shows the three forces acting on the block. The dashed forces are just a decomposition of the gravity force.}
\label{line}
\end{figure}

Let us assume that we want a block to move down with speed $v_0$ and we know that the kinetic coefficient of friction is $\mu_k$. In this note we classify the shapes of all possible ramps on which this block will move down with speed $v_0$. Here is how you would built this {\it constant-speed ramp}. Given $v_0$ and $\mu_k=\tan(\delta)$, let us define $a=\frac{g}{v_0^2 \sin\delta}$, ($g=9.81\, \frac{m}{s^2}$) and 

\begin{eqnarray}\label{alpha}
\alpha(s)=\left( s+\frac{1}{a}  \ln(1+\e^{-2as}), \frac{2}{a}\, \hbox{arccot}(\e^{-as}) \right)
\end{eqnarray}

This curve $\alpha$ has an U shape with two horizontal asymptotes, one at $y=0$ and the second one at $y=\frac{\pi}{a}$. See Figure
\ref{alpha1}. We have:
\begin{center}
{\it  If we clockwise rotate the curve $\alpha$ an angle $\delta$ (see Figure \ref{alpha23}), then, the highest point divides this rotated curve in two ramps on which a block can move with constant speed $v_0$ under the assumption that the kinetic coefficient of friction is $\mu_k=\tan(\delta)$}  
\end{center}
  
\begin{figure}[ht]
\centerline{\includegraphics[width=10cm,height=3cm]{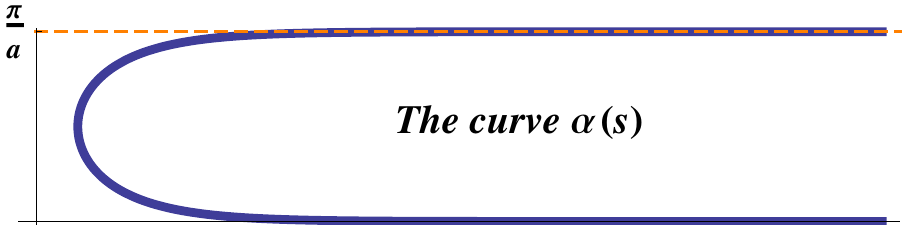}}
\caption{This is the graph of the curve $\alpha$, it has two horizontal asymptotes that are separated $\frac{\pi}{a}$. The parameter $s$ in the parametrization provided in the definition of $\alpha$ is arc-length.}
\label{alpha1}
\end{figure}

\begin{figure}[ht]
\centerline{\includegraphics[width=9cm,height=5cm]{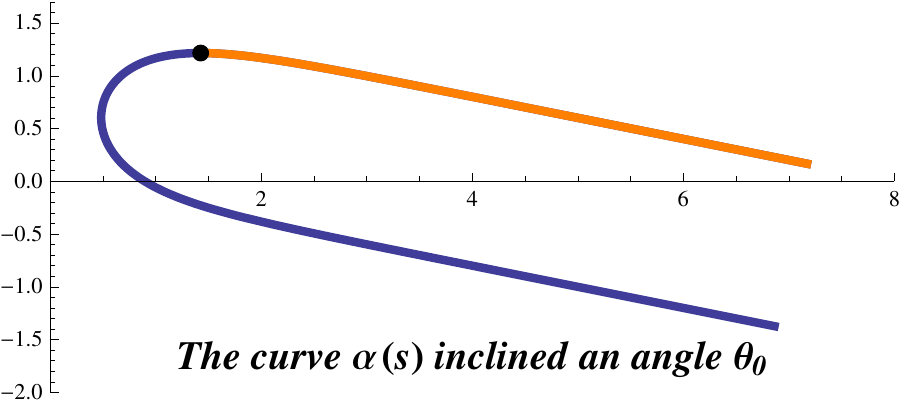}}
\caption{Graph of the two  {\it constant-speed ramps}. At the highest point, the block with speed $v_0$ must be placed on the top of the ramp to the right and it must be placed upside down on the ramp to the left. }
\label{alpha23}
\end{figure}

The sequence of pictures at the end of the paper shows how the motion on the ramps takes place when the desire speed is $5\, \frac{m}{s}$ and $\mu_k=0.5$. The picture also displays the three forces acting on the block under the assumption that the mass is $1$ Kg. On the highest part of ramp the normal is zero, the normal in the upper part of the ramp is pointing down, the bock does not fall down due to its speed. 

In section  \ref{ramp2d} we provide a first proof for the classification of two dimensional ramps. In section \ref{deframp} we provide a formal definition of ramp in order to state the result as a mathematical theorem. This section may be omitted. Section \ref{3dramps} deals with 3-dimensional ramps.

The author would like to express his gratitude to Frank Gould, Roger Vogeler, Nidal Al-Masoud and Clifford Anderson for their valuable comments on this work.

\section{A first approach}\label{ramp2d}
Let us find all the possible curves with the property that a block, sliding down on it, will move with constant speed $v>0$ under the assumption that the kinetic coefficient of friction is $\mu=\tan(\delta)$ for some constant $\delta$ between $0$ and $\frac{\pi}{4}$ radians. Let us start by assuming that this curve is parametrized by arc-length, this is, if $\alpha(s)=(x(s),y(s))$ denotes such a curve, then $|\alpha^\prime(s)|=1$.  Under this assumption we can assume that for a smooth function $\theta(s)$ we have

$$ x^\prime(s)=\cos(\theta(s))\com{and} y^\prime(s)=\sin(\theta(s))$$

The function $\theta(s)$ will help us to describe the curve $\alpha$. It is clear that the vector $n(s)=(-\sin(\theta(s)),\cos(\theta(s)))$ is a unit vector perpendicular to $\alpha^\prime(s)$ and the chain rule give us that $\alpha^{\prime\prime}(s)=\theta^\prime(s)\, n(s)$. Figure \ref{vectors} shows these two vectors

\begin{figure}[ht]
\centerline{\includegraphics[width=4.6cm,height=4.2cm]{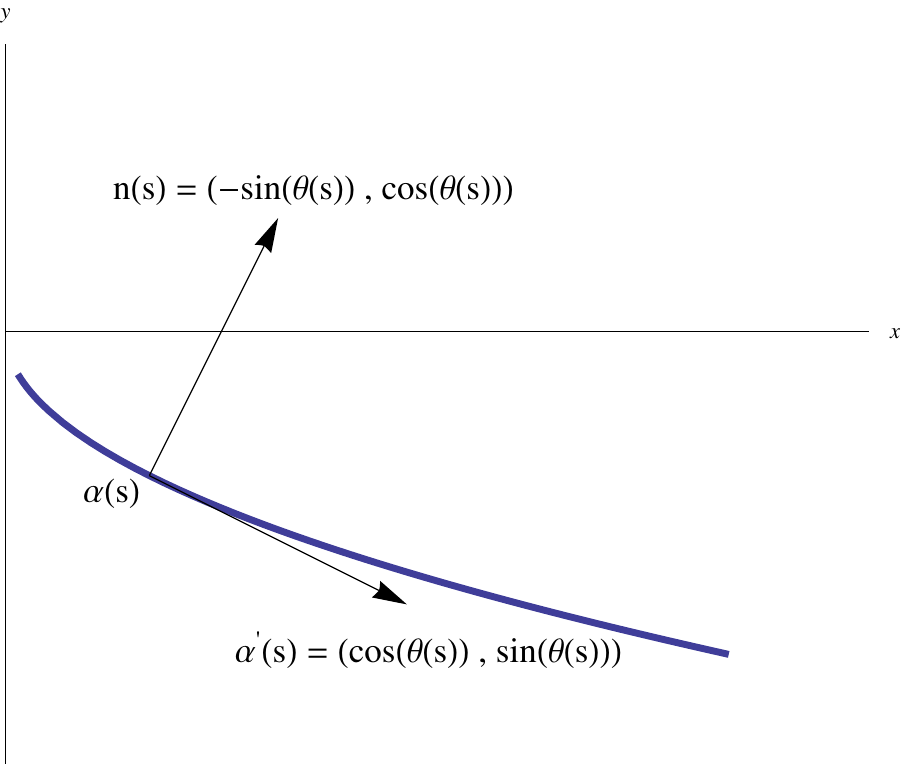}}
\caption{We are assuming that the parameter $s$ is arc-length parameter. The vectors $\alpha^\prime(s)$ and $n(s)$ are perpendicular.}
\label{vectors}
\end{figure}

Let us assume that $\beta(t)=\alpha(vt)$ describes the motion of the block. Since $s=vt$ and $s$ is arc-length parameter, then the  speed of the block is constant, it is $v$. Since we have that $\beta^{\prime\prime}(t)=v^2\alpha(vt)$ then, Figure \ref{fbd} shows the free body diagram for the problem of making the block move along $\alpha$ with constant speed $v$.

\begin{figure}[ht]
\centerline{\includegraphics[width=10.6cm,height=4.2cm]{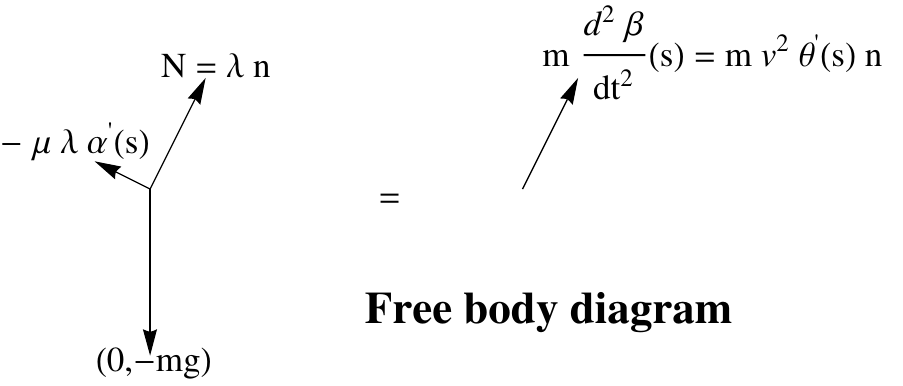}}
\caption{By Newton's second law, the sum of the normal force $N$ plus the weight plus the friction force must be $m\beta^{\prime\prime}$.}
\label{fbd}
\end{figure}

As the free body shows the following equation must hold true.

\begin{eqnarray}\label{nsl}
m v^2 \theta^\prime(s)n(s)=\lambda(s)\, n(s)-\tan(\delta) \lambda(s)\alpha^\prime(s)-(0,mg)
\end{eqnarray}

By doing the inner product of both sides of the equation \ref{nsl} with the vector $\alpha^\prime(s)$ we obtain that 

$$ \lambda(s)=-mg\cot(\delta) \sin(\theta(s))$$

Likewise, by doing the inner product of both sides of the equation \ref{nsl} with the vector $n(s)$, we obtain that $mv^2\theta^\prime=\lambda-mg\cos(\theta)$ and therefore, if $ a=\frac{g}{v^2\sin(\delta)}$, then

\begin{eqnarray}\label{the ode}
\theta^\prime(s)=\frac{-g}{v^2\sin(\delta)}\, \sin(\theta(s)+\delta)=-a\, \sin(\theta(s)+\delta)
\end{eqnarray}

Since the differential equation  (\ref{the ode}) does not have the variable $s$, then, all the solutions differ by a horizontal translation. This is, if $\theta(s)$ is a solution, then $\theta(s+c)$ is also a solution for every real number $c$. Due to the geometry of our problem we do not need to consider an integrating constant, since just one solution will give us all the solutions $\alpha(s)$. Recall that the equilibrium solution of the differential equation (\ref{the ode}) is $\theta(s)=-\delta$ for all $s$. This solution corresponds to the case $\alpha(s)=(\cos(\delta)\, s, -\sin(\delta)\, s)$ which is the straight line ramp shown in Figure \ref{line}. When we solve this differential equation by separation of variables we notice that we need to integrate the function $\csc(\theta+\delta)$. Instead of using the classical formula $\int\csc(u)\, du=-\ln(\csc(u)+\tan(u))$ we will use the formula $\int\csc(u)\, du=\ln(\tan(\frac{u}{2}))$ which led us to the formula

$$\theta(s)=-\delta+2\arctan(\e^{-as})$$

It is clear that if $\gamma(s)=(z(s),w(s))$ denotes a counterclockwise rotation of $\delta$ radians of the curve  $\alpha(s)$, then

$$z^\prime(s)=\cos(2\arctan(\e^{-as}))\com{and} w^\prime(s)=\cos(2\arctan(\e^{-as}))$$

Integrating the equations above we obtain that 

$$z(s)=s+\frac{\ln(1+\e^{-2as})}{a} \com{and} w(s)=\frac{2}{a}\, \hbox{\rm arccot}(\e^{-as})$$

The curve $(z(s),w(s))$ is shown in Figure \ref{alpha1}. As mentioned in the introduction, the solution that we are looking for is a clockwise rotation of the curve $\gamma$ by an angle $\delta$. In the next section we give a more detailed explanation of this solution. 
\section{Understanding the solution of the ODE. A mathematical definition of ramp}\label{deframp}

Let us start this section with a definition.

\begin{mydef}
We will say  that a curve $\gamma:[a_1,a_2]\to \bfR{2}$ is regular is $\gamma^\prime(t)$ never vanishes. We say $n:[a_1,a_2]\to \bfR{2}$ is a normal of the curve $\gamma$ if $n(t)$ has length 1 and the inner product of $n(t)$ and $\gamma^\prime(t)$ is zero, this is,  $n(t)\cdot\gamma(t)=0$ for all $t$. 
\end{mydef}


\begin{mydef}\label{def ramp}
A ramp in the plane $\bfR{2}$ is an ordered pair $(\gamma,n)$ where $\gamma:[a_1,a_2]\to \bfR{2}$ is a  regular curve and $n:[a_1,a_2]\to \bfR{2}$ is a normal. We will interpret  the ramp $(\gamma,n)$ as a portion of the plane  whose boundary contains  $\gamma$ and its outer normal vector along  is $n$. 

\end{mydef}

\begin{example}
$(\gamma_1,n_1)$ where $\gamma_1,\,  n_1:[0,\pi]\to \bfR{2}$ are given by $\gamma_1(t)=5 (\cos(t),\sin(t))$ and $n_1(t)=(\cos(t),\sin(t))$ is an example of a ramp. This ramp is shown in the first two images in Figure \ref{rampex}. $(\gamma_2,n_2)$ where $\gamma_2,\,  n_2:[0,\pi]\to \bfR{2}$ are given by $\gamma_2(t)=5 (\cos(t),\sin(t))$ and $n_2(t)=-(\cos(t),\sin(t))$ is an example of a ramp. This ramp is shown in the last two images in Figure \ref{rampex}.

\end{example}

\begin{figure}[ht]
\centerline{\includegraphics[width=4.6cm,height=3.1cm]{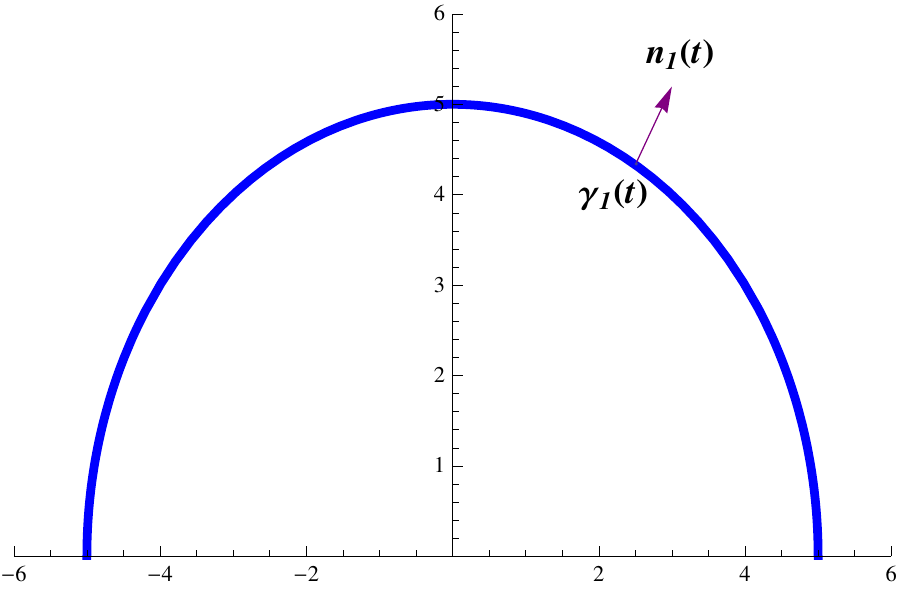}\includegraphics[width=4.6cm,height=3.1cm]{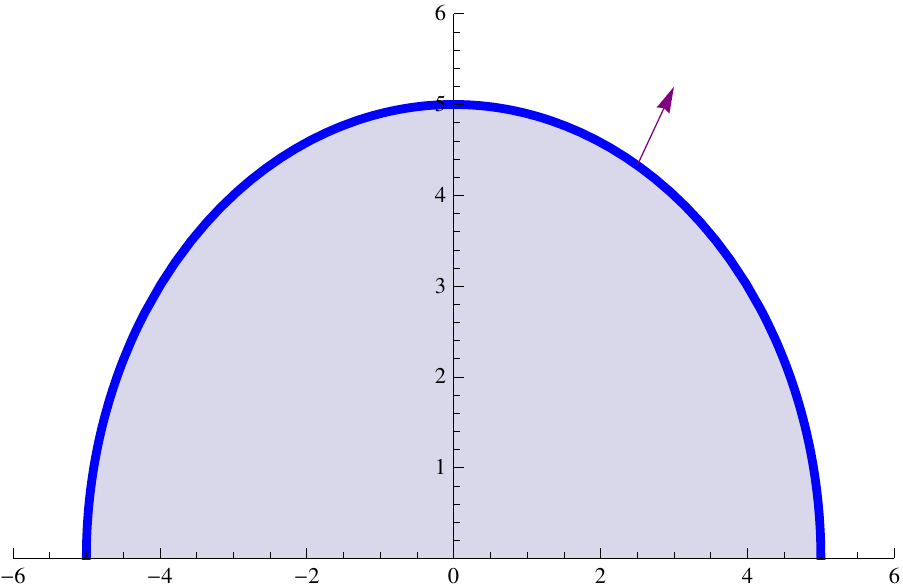}\includegraphics[width=4.6cm,height=3.1cm]{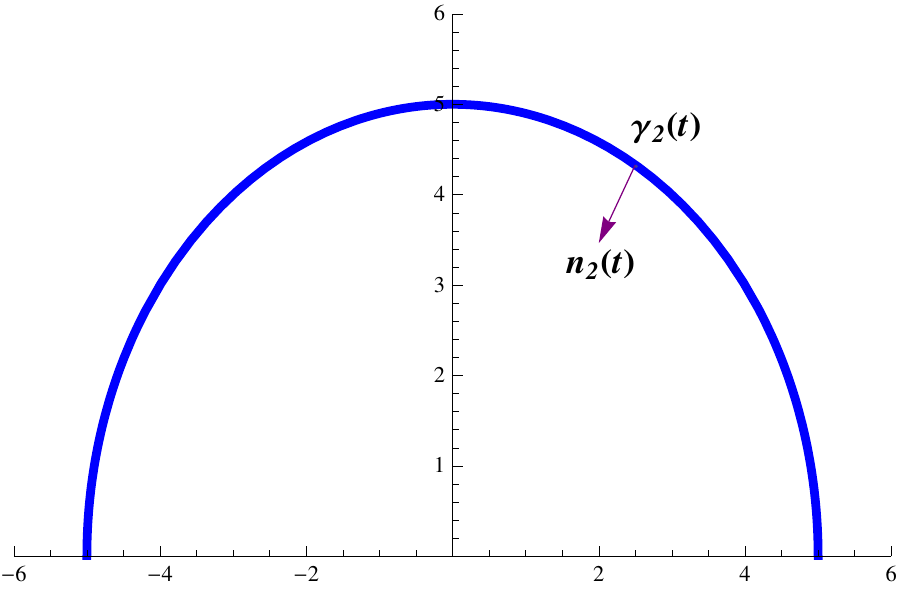}\includegraphics[width=4.6cm,height=3.1cm]{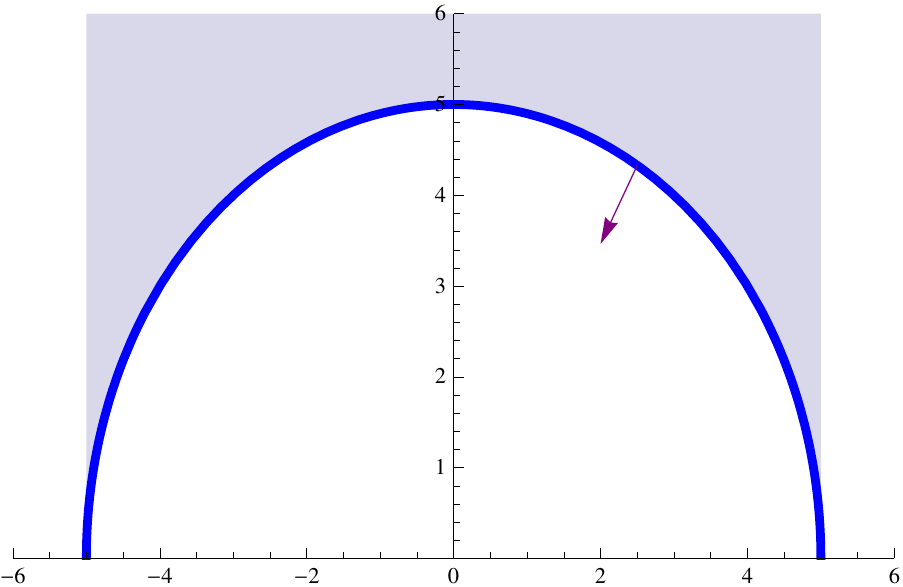}}
\caption{A normal vector of a curve helps us to define the part of a curve where we want a object to slide on a ramp}
\label{rampex}
\end{figure}

The following definition is based on Newton's second law.


\begin{mydef}\label{def sol} A {\bf ramp with external force} is a triple  $(\gamma,n,F)$ where  $F:[a_1,a_2]\to \bfR{2}$ is a smooth function and  $(\gamma,n)$ is a ramp. Given a positive number $m$, a {\bf solution of the ramp with external force $(\gamma,n,F)$} is given by a curve $\beta(t)=\gamma(h(t))$ (Notice that $\beta$ is just a re-pametrization of the curve $\gamma$ and it is completely determined by the function $h:[a_1,a_2]\to {\bf R}$) and a nonnegative function $ \lambda:[a_1,a_2]\to {\bf R} $ such that 

$$F(h(t))+\lambda(t) n(h(t))= m \beta^{\prime\prime}(t)$$

\end{mydef}

\begin{rem}
In Definition \ref{def sol}, the term $\lambda(t) n(t)$ represents the force that the surface of the ramp is doing on the object that moves on the ramp under the action of the external force $F$. By Newton's third law,  $-\lambda(t) n(t)$ is the force that the object is doing to the ramp. The condition $\lambda\ge0$ is needed so that the object stays on the ramp.
\end{rem}

\begin{example}
If  $(\gamma,n)$ is a ramp, $m>0$ and $F(t)=(0,-9.81\, m)$, then the triple  $(\gamma,n,F)$ represents the action of the gravity acting on a particle with mass $m$ that moves on the ramp without friction. 
\end{example}

\begin{example}
Let us consider the ramp $(\gamma,n)$ where $\gamma,\,  n_2:[-1.5,1.5]\to \bfR{2}$ are given by $\gamma(t)=(t,t^2)$ and $n(t)=(\frac{2t}{\sqrt{4t^2+1}},\frac{-1}{\sqrt{4t^2+1}})$.  If $F(t)=(0,-9.81)$ then, the ramp with external force $(\gamma,n,F)$ has no solution.  Figure 
\ref{nsramp} shows this ramp.

\end{example}

\begin{figure}[ht]
\centerline{\includegraphics[width=5cm,height=6cm]{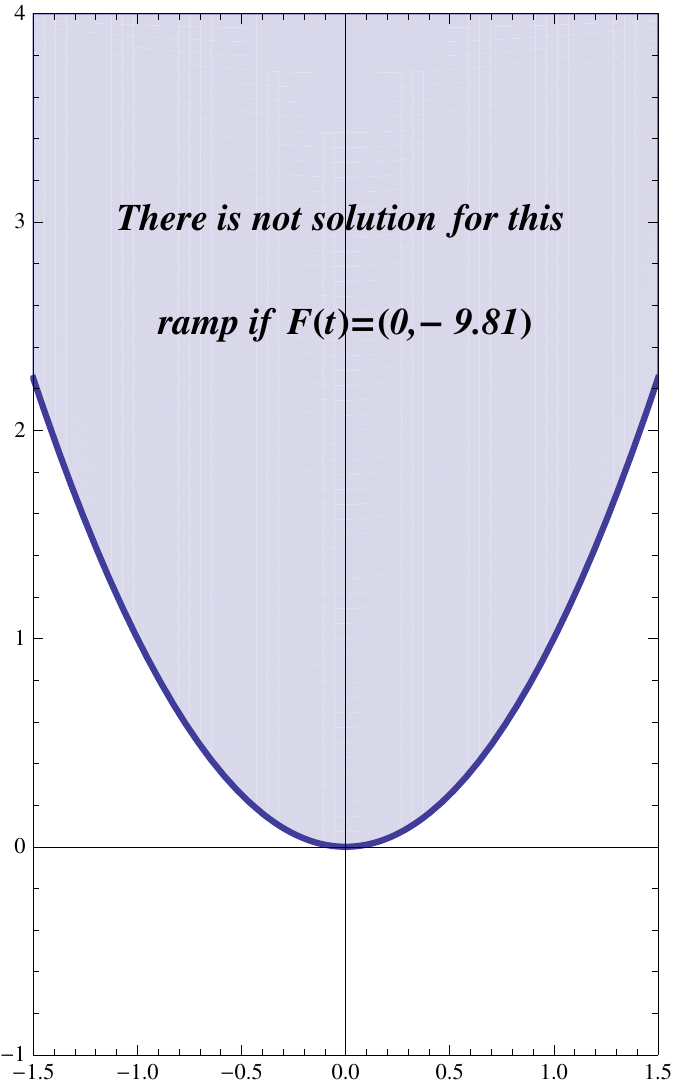}}
\caption{Some ramps with external force do not have a solution.}
\label{nsramp}
\end{figure}


Definition \ref{def sol} provides the definition of the solution of a particle moving on the ramp under the action of the  force $F$.
In the case that $F$ denotes all the forces acting on the particle that moves on the ramp {\it but} the friction force, the definition of solution needs to be changed to:

\begin{mydef}\label{solwfric} Given a ramp with external force  $(\gamma,n,F)$ defined on the interval $[a_1,a_2]$ and two positive numbers $\mu<1$ and $m$, a {\bf solution of the ramp with external force $(\gamma,n,F)$ and kinetic  coefficient of friction $\mu$ } is given by a curve $\beta(t)=\gamma(h(t))$ and a nonnegative function $ \lambda:[a_1,a_2]\to {\bf R} $ such that 

$$F(h(t))+ \lambda(t) n(h(t))-\mu \lambda(t) \frac{\beta^\prime(t)}{|\beta^\prime (t)|}   = m \beta^{\prime\prime}(t)$$

\end{mydef}


\begin{rem}
Let us define an {\it  elementary function} to be a function of one variable with real values built from a finite number of  trigonometric, exponential, constant, 
$n^{th}$ functions  and their inverses, through composition and using the four basic elementary operation (+, -, $\times$, $\div$). When we want to define basic examples of curves defined by arc-length parameter, that is, if we want to define $\gamma(s)=(x(s),y(s))$ such that $x^\prime(s)^2+y^\prime(s)^2=1$ for all $t$, then, the first thing that comes to our mind if to find easy possibilities for $x^\prime(t)$ and $y^\prime(t)$. The easiest one would be 
a pair of  numbers $c_1$  and $c_2$ such that $c_1^2+c_2^2=1$. If we make $x^\prime(s)=c_1$ and $y^\prime(s)=c_2$ then after and easy integration we obtain that the curve $\gamma$ is a straight line. If we want to use the fact that $\cos(as)^2+\sin(as)^2=1$ and we decide to make $x^\prime(s)=\cos(as)$ and $y^\prime(s)=\sin(as)$, then, after an easy integration we obtain that $\gamma$ should be a circle. The most important curves in this paper are those that are obtained by using the identity

$$ \hbox{\rm tanh}(t)^2+\hbox{\rm sech}(t)^2=1\quad \hbox{for all $t$} $$

That is, we will be using curves that satisfy $x^\prime(s)=\hbox{\rm tanh}(as)$ and $y^\prime(s)=\hbox{\rm sech}(as)$.

\end{rem}

The following lemma is a direct computation and provides  a definition of the curve $\alpha$ whose graph is shown in Figure \ref{alpha1}

\begin{lem}\label{lemma1}
For any non zero real number $a$, the curve 

\begin{eqnarray}\label{alpha}
\alpha(s)=(x(s),y(s)))=\left( s+\frac{1}{a}  \ln(1+\e^{-2as}), \frac{2}{a}\, \hbox{\rm arccot}(\e^{-as}) \right)
\end{eqnarray}

is an arc-length parametrized curve. Moreover,

$$x^\prime(s)=\hbox{\rm tanh(as)}\com{and} y^\prime(s)=\hbox{\rm sech(as)}$$

\end{lem}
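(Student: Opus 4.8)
The plan is to verify the two claimed derivative formulas by direct differentiation, and then observe that the arc-length property follows immediately from the identity $\tanh^2(as)+\operatorname{sech}^2(as)=1$ highlighted in the preceding remark. So the real content is just computing $x^\prime(s)$ and $y^\prime(s)$ and simplifying.

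First I would differentiate $x(s)=s+\frac{1}{a}\ln(1+\e^{-2as})$. The derivative of the first term is $1$, and the chain rule on the logarithm gives $\frac{1}{a}\cdot\frac{-2a\,\e^{-2as}}{1+\e^{-2as}}=\frac{-2\e^{-2as}}{1+\e^{-2as}}$. Adding $1$ and putting over a common denominator yields $\frac{1+\e^{-2as}-2\e^{-2as}}{1+\e^{-2as}}=\frac{1-\e^{-2as}}{1+\e^{-2as}}$. The task is then to recognize this as $\tanh(as)$: multiplying numerator and denominator by $\e^{as}$ gives $\frac{\e^{as}-\e^{-as}}{\e^{as}+\e^{-as}}=\tanh(as)$, which is exactly the first claimed formula.

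Next I would differentiate $y(s)=\frac{2}{a}\operatorname{arccot}(\e^{-as})$. Using $\frac{d}{du}\operatorname{arccot}(u)=\frac{-1}{1+u^2}$ together with the inner derivative $\frac{d}{ds}\e^{-as}=-a\,\e^{-as}$, the constant factor $\frac{2}{a}$ cancels the $a$ and the two minus signs cancel, leaving $y^\prime(s)=\frac{2\e^{-as}}{1+\e^{-2as}}$. Multiplying numerator and denominator by $\e^{as}$ turns this into $\frac{2}{\e^{as}+\e^{-as}}=\operatorname{sech}(as)$, matching the second claimed formula.

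With both derivatives in hand, the final step is the one-line check $x^\prime(s)^2+y^\prime(s)^2=\tanh^2(as)+\operatorname{sech}^2(as)=1$, so $|\alpha^\prime(s)|=1$ and $\alpha$ is parametrized by arc length. I do not anticipate a genuine obstacle here, since everything is an elementary differentiation; the only place requiring a small amount of care is the algebraic simplification of the rational expressions in $\e^{-2as}$ into the hyperbolic functions, which is handled cleanly by the $\e^{as}$-trick above. It is also worth noting that the formula $\frac{d}{du}\operatorname{arccot}(u)=-\frac{1}{1+u^2}$ (rather than $+\frac{1}{1+u^2}$) is what makes $y^\prime$ come out positive, consistent with the increasing $y$-coordinate seen in Figure \ref{alpha1}.
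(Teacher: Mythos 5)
Your computation is correct and is exactly the ``direct computation'' the paper invokes without writing out: differentiate each component, use the $\e^{as}$-rescaling to identify the results as $\tanh(as)$ and $\hbox{\rm sech}(as)$, and conclude arc-length parametrization from $\tanh^2(as)+\hbox{\rm sech}^2(as)=1$. Nothing is missing, and your sign check on $\frac{d}{du}\,\hbox{\rm arccot}(u)=-\frac{1}{1+u^2}$ correctly accounts for why $y^\prime(s)>0$.
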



\begin{lem}\label{lemma2}
For any  positive number $\delta$ smaller than $\frac{\pi}{2}$, if $\alpha_\delta$ represents the curve obtained by  rotating  an  angle of $\delta$ radians the curve $\alpha=(x(s),y(s))$ defined in Lemma \ref{lemma1}, this is, if 
\begin{eqnarray}\label{alphabeta}
\alpha_\delta(s)=(x_\delta(s),y_\delta(s))=\left(\cos(\delta)x(s)+\sin(\delta) y(s),-\sin(\delta) x(s)+\cos(\delta)y(s) \right)
\end{eqnarray}

then, the maximum value for the second entry  of $\alpha_\delta$ is achieved when $s=\frac{\hbox{\rm arcsinh($\cot(\delta)$)}}{a}$. Also, we have that 

\begin{eqnarray}\label{normal}
\alpha^{\prime\prime}_\delta(s)\cdot (y^\prime_\delta(s),-x^\prime_\delta(s))= a\, \hbox{\rm sech}
(as)\end{eqnarray}

The graph of the curve $\alpha_\delta$ is shown in Figure \ref{alpha23}.

\end{lem}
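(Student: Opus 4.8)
The plan is to treat the two assertions separately, each by a direct computation that rests on the clean derivatives $x'(s)=\tanh(as)$ and $y'(s)=\operatorname{sech}(as)$ supplied by Lemma \ref{lemma1}.

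For the location of the maximum, I would first differentiate the second coordinate of $\alpha_\delta$. From (\ref{alphabeta}),
$$y_\delta'(s) = -\sin(\delta)\,x'(s) + \cos(\delta)\,y'(s) = -\sin(\delta)\tanh(as) + \cos(\delta)\operatorname{sech}(as).$$
Setting this equal to zero and clearing the common factor $\operatorname{sech}(as)$ (which never vanishes) reduces the equation to $\sinh(as)=\cot(\delta)$, that is $s=\frac{\operatorname{arcsinh}(\cot(\delta))}{a}$, which is exactly the claimed critical point. To confirm that this is a genuine maximum and not merely a critical point, I would examine the sign of $y_\delta'$ (taking $a>0$, the physically relevant case; $a<0$ is symmetric): as $s\to-\infty$ we have $\tanh(as)\to-1$ and $\operatorname{sech}(as)\to0$, so $y_\delta'\to\sin(\delta)>0$, while as $s\to+\infty$ we get $y_\delta'\to-\sin(\delta)<0$. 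Since $\sinh$ is strictly increasing the critical point is unique, so $y_\delta'$ passes from positive to negative there, identifying it as the global maximum.

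For the identity (\ref{normal}), the key observation is that the pairing $\alpha_\delta''\cdot(y_\delta',-x_\delta')$ is invariant under the rotation, so it can be evaluated on the unrotated curve $\alpha$. Writing $R$ for the rotation by $\delta$ and $J$ for the clockwise quarter turn $(v_1,v_2)\mapsto(v_2,-v_1)$, I note that $(y_\delta',-x_\delta')=J\alpha_\delta'=JR\alpha'=RJ\alpha'=R(y',-x')$, using that plane rotations commute. Since $\alpha_\delta''=R\alpha''$ and $R$ preserves the inner product, the left-hand side of (\ref{normal}) equals $\alpha''\cdot(y',-x')=x''y'-y''x'$.

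It then remains to carry out the direct computation on the explicit curve. Differentiating once more gives $x''(s)=a\operatorname{sech}^2(as)$ and $y''(s)=-a\operatorname{sech}(as)\tanh(as)$, so
$$x''y'-y''x' = a\operatorname{sech}^3(as) + a\operatorname{sech}(as)\tanh^2(as) = a\operatorname{sech}(as)\bigl(\operatorname{sech}^2(as)+\tanh^2(as)\bigr) = a\operatorname{sech}(as),$$
where the last equality is precisely the identity $\operatorname{sech}^2+\tanh^2=1$ highlighted in the preceding remark. This establishes (\ref{normal}). I do not expect a real obstacle here; the only point worth stating carefully is the commutation-plus-orthogonality argument that collapses the rotated pairing to the unrotated one. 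One could instead expand everything directly in $\cos\delta$ and $\sin\delta$, but that is more error-prone, so recognizing the rotation invariance is the step I would be most deliberate about.
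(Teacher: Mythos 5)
Your proposal is correct and follows essentially the same route as the paper: the paper also finds the critical point by solving $y_\delta'(s)=-\sin(\delta)\tanh(as)+\cos(\delta)\operatorname{sech}(as)=0$, and reduces (\ref{normal}) to the unrotated identity $x''y'-y''x'=a\operatorname{sech}(as)$ by invoking rotation invariance of the inner product. Your only additions — spelling out the commutation of the quarter-turn with the rotation, and checking the sign change of $y_\delta'$ to confirm a genuine maximum — are details the paper leaves implicit (the sign change appears separately as Remark \ref{dydelta}).
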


\begin{proof}
A direct computation using Lemma \ref{lemma1} shows that $y_\delta^\prime(s)=-\sin(\delta)\hbox{\rm tanh}(as)+\cos(\delta)\hbox{\rm sech}(as)$, then we can see that the only solution of the equation $y_\delta^\prime(s)=0$ is $s=\frac{ \hbox{\rm arcsinh($\cot(\delta)$)}}{a}$. In order to prove the identity \ref{normal} we point out that since the inner product is invariant under rotations, this identity is equivalent to show that $\alpha^{\prime\prime}(s)\cdot (y^\prime(s),-x^\prime(s))= a\, \hbox{\rm sech}(as)$ which follows because,

$$ x^{\prime\prime}(s)y^\prime(s)-y^{\prime\prime}(s)x^\prime(s)=a\hbox{\rm sech}^3(as)+a \hbox{\rm tanh}^2(as)\hbox{\rm sech}(as)=a\hbox{\rm sech}(as)\, (\hbox{\rm sech}^2(as)+\hbox{\rm tanh}^2(as))=a\hbox{\rm sech}(as)$$
\end{proof}

\begin{rem}\label{dydelta}
In the previous proof we got that $y_\delta^\prime(s)=-\sin(\delta) \hbox{\rm sech}(as) (\hbox{\rm sinh}(as)-\cot(\delta))$. It follows that $y_\delta^\prime(s)>0$ if $s<s_0$ and $y_\delta^\prime(s)<0$ if $s>s_0$.
\end{rem}


\begin{mydef} \label{ramp def}
For any  positive number $\delta$ smaller than $\frac{\pi}{2}$ and any $a>0$ we define the ramps $(\gamma_\delta,n_\delta)$ and $(\tilde{\gamma}_\delta,\tilde{n}_\delta)$ by 

$$\gamma_\delta(s)=\alpha_\delta(s_0-s) \com{and} n_\delta(s) = (y_\delta^\prime(s_0-s),-x_\delta^\prime(s_0-s)) $$

and

$$\tilde{\gamma}_\delta(s) = \alpha_\delta(s+s_0) \com{and} \tilde{n}_\delta(s) = (-y_\delta^\prime(s+s_0),x_\delta^\prime(s+s_0)) $$

where $s_0=\frac{\hbox{\rm arcsinh($\cot(\delta)$)}}{a}$ and the maps $\gamma_\delta$ and $n_\delta$ are defined in Lemma \ref{lemma2} and all the function $\alpha_\delta, n_\delta, \tilde{\alpha}_\delta, \tilde{n}_\delta$ are defined on the interval  $[0,\infty)$. These two ramps are shown in Figure \ref{fig ramps}.

\end{mydef}


\begin{figure}[ht]
\centerline{\includegraphics[width=6.1cm,height=4.6cm]{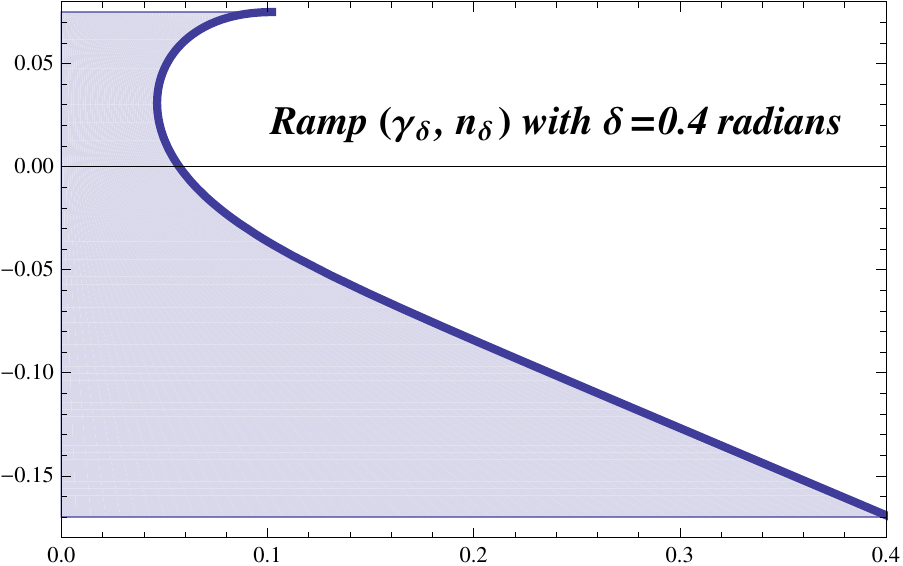}\hskip1cm\includegraphics[width=6.6cm,height=4.1cm]{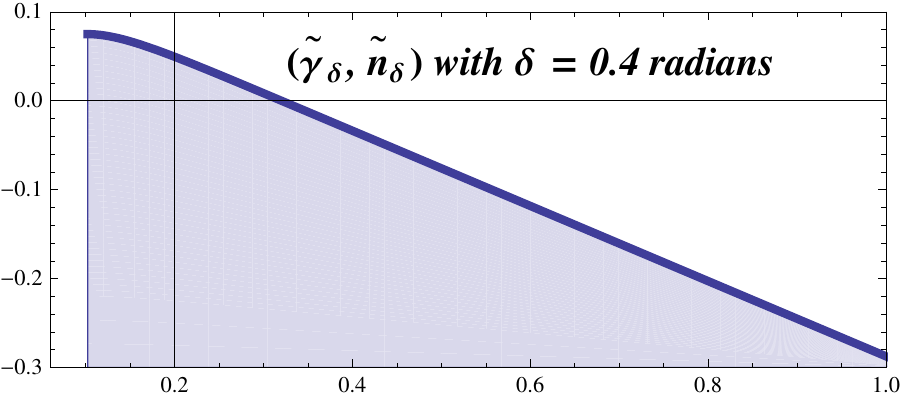}}
\caption{The two ramps define in Definition \ref{ramp def}}
\label{fig ramps}
\end{figure}

Let us state and proof the main theorem in this section.


\begin{thm}  Given an angle $\delta$ between $0$ and $\frac{\pi}{4}$ radians and a positive speed $v>0$. Let us consider $\mu=\tan(\delta)$, $a=\frac{g}{v^2 \sin\delta}$ and $F(t)=(0,-mg)$ with $g=9.81$. We have

{\bf a.)} For the  ramp $(\gamma_\delta,n_\delta)$ given in Definition \ref{ramp def},

$$ \beta(t)= \gamma_\delta (vt) \com{and} \lambda(t)= m g \cot(\delta) \, y_\delta^\prime (s_0-vt)$$

is a solution for the  ramp with external force $(\gamma_\delta,n_\delta,F)$ and kinetic coefficient of friction $\tan(\delta)$. 

{\bf b.)}  For the ramp   $(\tilde{\gamma}_\delta,\tilde{n}_\delta)$    given in Definition \ref{ramp def},

$$ \beta(t)=\tilde{ \gamma}_\delta (vt) \com{and} \lambda(t)=-m g \cot(\delta) \, y_\delta^\prime (s_0+vt)$$

is a solution for the  ramp with external force $(\tilde{\gamma}_\delta,\tilde{n}_\delta,F)$  and kinetic coefficient of friction $\tan(\delta)$. 

{\bf c.)} Since in both cases $|\beta^\prime(t)|=v$, then these motions have constant speed.

\end{thm}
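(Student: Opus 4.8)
The plan is to verify directly that the pair $(\beta,\lambda)$ exhibited in each part satisfies the defining equation of Definition \ref{solwfric}, namely
$$F(h(t))+\lambda(t)\,n(h(t))-\mu\,\lambda(t)\,\frac{\beta^\prime(t)}{|\beta^\prime(t)|}=m\,\beta^{\prime\prime}(t),$$
together with the sign constraint $\lambda\ge0$. For part (a) I would first substitute $u=s_0-vt$, so that $\beta(t)=\gamma_\delta(vt)=\alpha_\delta(u)$ with $h(t)=vt$, and compute the kinematic quantities by the chain rule: since $du/dt=-v$ one gets $\beta^\prime(t)=-v\,\alpha^\prime_\delta(u)$ and $\beta^{\prime\prime}(t)=v^2\,\alpha^{\prime\prime}_\delta(u)$. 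Because $\alpha_\delta$ is a rotation of the arc-length curve $\alpha$ of Lemma \ref{lemma1}, it is itself unit-speed, so $|\beta^\prime(t)|=v$ and $\beta^\prime(t)/|\beta^\prime(t)|=-\alpha^\prime_\delta(u)$; this observation already gives part (c). I would also record that along the motion the outer normal is $n(h(t))=(y^\prime_\delta(u),-x^\prime_\delta(u))$, the unit vector orthogonal to the tangent $\alpha^\prime_\delta(u)=(x^\prime_\delta(u),y^\prime_\delta(u))$.

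The second step is to check the force equation by projecting both sides onto the orthonormal frame $\{\alpha^\prime_\delta(u),\,(y^\prime_\delta(u),-x^\prime_\delta(u))\}$; matching the two scalar components is equivalent to the full vector identity. In the tangential direction the normal force $\lambda n$ contributes nothing, the gravity term contributes $-mg\,y^\prime_\delta$, and the friction term contributes $-\mu\lambda\,(\beta^\prime/|\beta^\prime|)\cdot\alpha^\prime_\delta=\mu\lambda$: here the crucial simplification is $\tan\delta\cot\delta=1$, which turns $\mu\lambda=\tan\delta\cdot mg\cot\delta\,y^\prime_\delta$ into $mg\,y^\prime_\delta$, so the two contributions cancel and the tangential left-hand side is $0$. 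The right-hand side $mv^2\,\alpha^{\prime\prime}_\delta\cdot\alpha^\prime_\delta$ is also $0$ since $\alpha_\delta$ is unit-speed. This is exactly the balance that forces the choice $\lambda=mg\cot\delta\,y^\prime_\delta$ and keeps the speed constant.

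The main work, and the step I expect to be the real obstacle, is the normal balance. On the left I would expand $x^\prime_\delta=\cos\delta\,\hbox{\rm tanh}(au)+\sin\delta\,\hbox{\rm sech}(au)$ and $y^\prime_\delta=-\sin\delta\,\hbox{\rm tanh}(au)+\cos\delta\,\hbox{\rm sech}(au)$ from Lemma \ref{lemma2}; the normal left-hand side is $mg\,x^\prime_\delta+mg\cot\delta\,y^\prime_\delta$, the $\hbox{\rm tanh}$ terms cancel, and the identity $\sin\delta+\cos^2\delta/\sin\delta=1/\sin\delta$ collapses the remainder to $(mg/\sin\delta)\,\hbox{\rm sech}(au)$. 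On the right, the projection of $m\beta^{\prime\prime}=mv^2\alpha^{\prime\prime}_\delta$ onto $(y^\prime_\delta,-x^\prime_\delta)$ is handled by the key identity \ref{normal} of Lemma \ref{lemma2}, which gives $mv^2\,a\,\hbox{\rm sech}(au)$; inserting $a=g/(v^2\sin\delta)$ yields $(mg/\sin\delta)\,\hbox{\rm sech}(au)$, matching the left-hand side. This is the only place where the geometric input (the curvature identity) and the physical constant $a$ must conspire, and is where the argument really lives.

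Finally, for the sign condition I would invoke Remark \ref{dydelta}: the motion runs over $u=s_0-vt\le s_0$, so $y^\prime_\delta(u)\ge0$, and since $\cot\delta>0$ we conclude $\lambda\ge0$. Part (b) then follows by the identical computation with $w=s_0+vt$ in place of $u$: now $du/dt$ is replaced by $dw/dt=+v$, so $\beta^\prime/|\beta^\prime|=+\alpha^\prime_\delta(w)$, the normal $\tilde n_\delta$ carries the opposite sign, and $w\ge s_0$ forces $y^\prime_\delta(w)\le0$ so that $\lambda=-mg\cot\delta\,y^\prime_\delta(w)\ge0$. Every sign flips consistently, the tangential and normal scalar components again agree, and $|\beta^\prime(t)|=v$ as before, completing all three parts.
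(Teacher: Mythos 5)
Your proposal is correct and follows essentially the same route as the paper's own proof: the same kinematic computation $\beta'(t)=-v\,\alpha'_\delta(s_0-vt)$, $\beta''(t)=v^2\alpha''_\delta(s_0-vt)$, the same projection onto the orthonormal frame of tangent and normal, the same collapse of the normal component to $\frac{mg}{\sin\delta}\,\hbox{sech}(a(s_0-vt))$ matched against the curvature identity (\ref{normal}) with $a=\frac{g}{v^2\sin\delta}$, and the same appeal to Remark \ref{dydelta} for $\lambda\ge 0$. The only cosmetic difference is that you spell out part (b) sign-by-sign where the paper simply declares it ``similar.''
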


\begin{proof}

Let us prove part {\bf a.)}. First of all, notice that as it is required by Definition \ref{solwfric}, $\lambda>0$ by remark \ref{dydelta}. From the definition of $\beta$   we obtain that 

$$ \beta^\prime(t)=-v\alpha^\prime_\delta(s_0-vt) \com{and} \beta^{\prime\prime}(t)=v^2\, \alpha_\delta^{\prime\prime}(s_0-vt)$$

therefore the equation

$$F(h(t))+ \lambda(t) n(h(t))-\mu \lambda(t) \frac{\beta^\prime(t)}{|\beta^\prime (t)|}   = m \beta^{\prime\prime}(t)$$

is equivalent to 

\begin{eqnarray}\label{eq1}
(0,-mg)+ \lambda(t)n_\delta(vt) + \tan(\delta) \lambda(t) \alpha_\delta^\prime(s_0-vt)   = m v^2 \alpha_\delta^{\prime\prime}(t)
\end{eqnarray}

Notice that the vector ${\bf u_1}=n_\delta(vt)$ and ${\bf u_2}= \alpha_\delta^\prime(s_0-tv)$ form an orthonormal basis. Therefore in order to prove equation (\ref{eq1}) it is enough to prove that the dot product with ${\bf u_1}$ and   ${\bf u_2}$ of the left hand side (LHS) and right hand side (RHS) of the equation is the same. The dot product of the LHS of equation (\ref{eq1}) with ${\bf u_1}$  is equal to 

\begin{eqnarray*}
mg\, x_\delta^\prime(s_0-vt) +\lambda(t)&=&mg\, x_\delta^\prime(s_0-vt) +\cot(\delta) y^\prime_\delta(s_0-vt)\\
  &=&  mg\left(\cos(\delta) \hbox{\rm tanh}(a(s_0-vt))+\sin(\delta) \hbox{\rm sech}(a(s_0-vt))   \right)+\\
 & &m g\cot(\delta) \left(-\sin(\delta) \hbox{\rm tanh}(a(s_0-vt))+
   \cos(\delta) \hbox{\rm sech}(a(s_0-vt))   \right)\\
   &=& \frac{mg}{\sin(\delta)} \,   \hbox{\rm sech}(a(s_0-vt)) 
\end{eqnarray*}

Using Equation (\ref{normal}) we get that the dot product of the RHS  of equation (\ref{eq1}) with ${\bf u_1}$  is equal to 

$$ mv^2 a \hbox{\rm sech} (a (s_0-vt))= \frac{mg}{\sin(\delta)} \,   \hbox{\rm sech}(a(s_0-vt))$$

Therefore LHS $\cdot$ $ {\bf u_1}$ =RHS $\cdot $ $ {\bf u_1}$. It is easy to check that the dot product of the RHS of equation (\ref{eq1}) with ${\bf u_2}$ vanishes. On the other hand, the dot product of the LHS of the equation (\ref{eq1}) with ${\bf u_2}$ equals to

$$ -mg\, y_\delta^\prime(s_0-vt)+v\tan(\delta)\, \lambda=-mg\, y_\delta^\prime(s_0-vt)+\tan(\delta)\, mg\cot(\delta) y_\delta^\prime(s_0-vt)
=0$$ 

Therefore part {\bf a.)} follows. Part {\bf b.)} is similar. Part {\bf c.)} follows because $|\alpha^\prime|=1$ and since $\alpha_\delta$ is a rotation of $\alpha$ then $|\alpha^\prime_\delta|=1$, then, $\beta^\prime(t)=-v\alpha^\prime(s_0-vt)$ and $|\beta^\prime(t)|=v$
 
\end{proof}
\section{3-Dimensional ramps}\label{3dramps}

In this section we describe ramps in the space on which an object can move with constant speed $v_0$ under the assumption that the kinetic coefficient of friction is $\mu$. We will prove that fixing $v_0$ and $\mu$, there are as many different ramp as continuous unit tangent vector fields in the south hemisphere. In the correspondence that we will establish, the ramp that we defined in section \ref{ramp2d} corresponds to a particular choice of the a tangent vector field. 

For curves that represent planar ramps, we got a description of them by studying first their velocity vector. Recall that the function $\theta(s)$ was used to described a point in the lower part of the semicircle that represented a possible velocity vector of the curve that describe the ramp. See Figure \ref{lhs}. When this curve is free to move in the whole 3-dimensional space, the tangent unit vector to the curve lies in the unit sphere and since the block must go down the ramp we can assume that the tangent unit vector of the ramp lies in the shouth hemisphere. See Figure \ref{lhs}. 

\begin{figure}[ht]
\centerline{\includegraphics[width=4.9cm,height=4.cm]{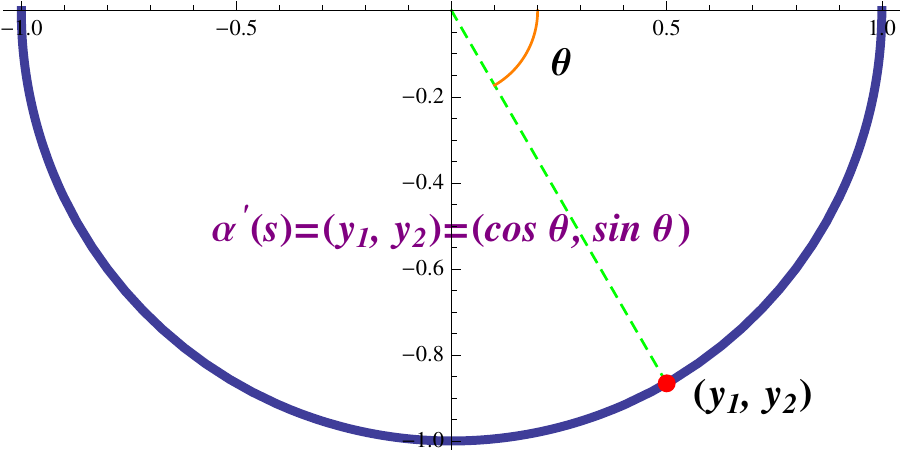}\includegraphics[width=4.6cm,height=4.6cm]{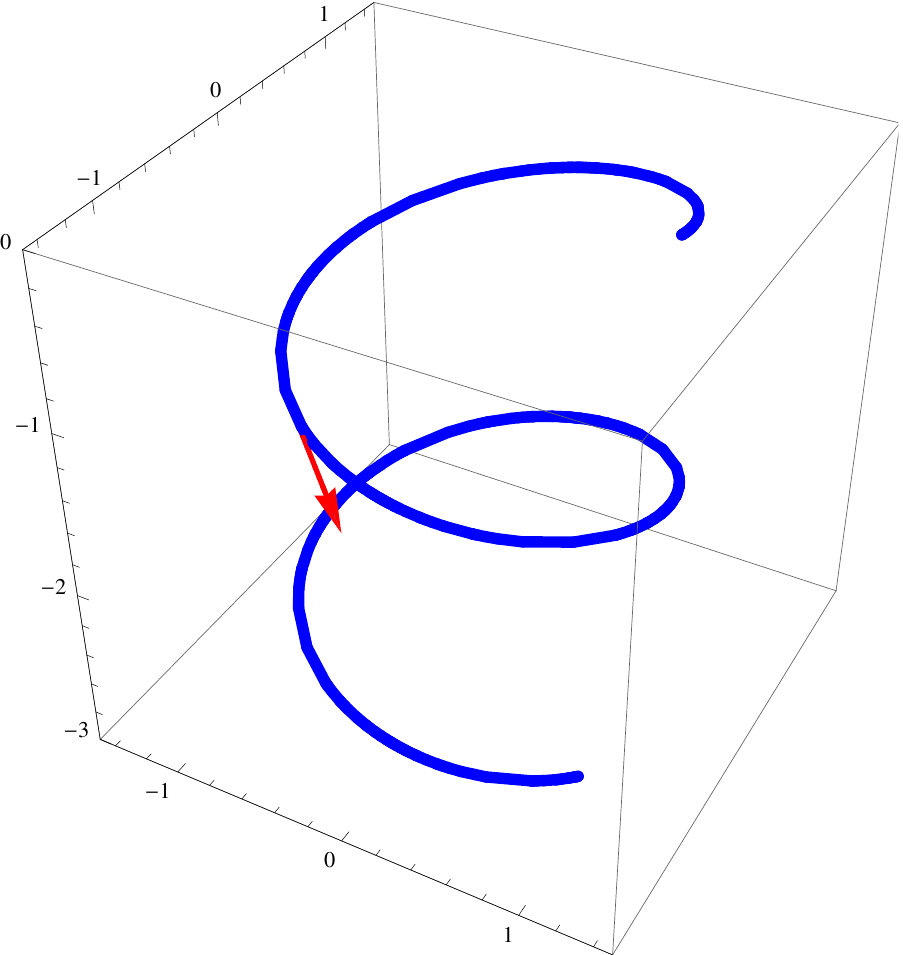}\includegraphics[width=4.6cm,height=4.6cm]{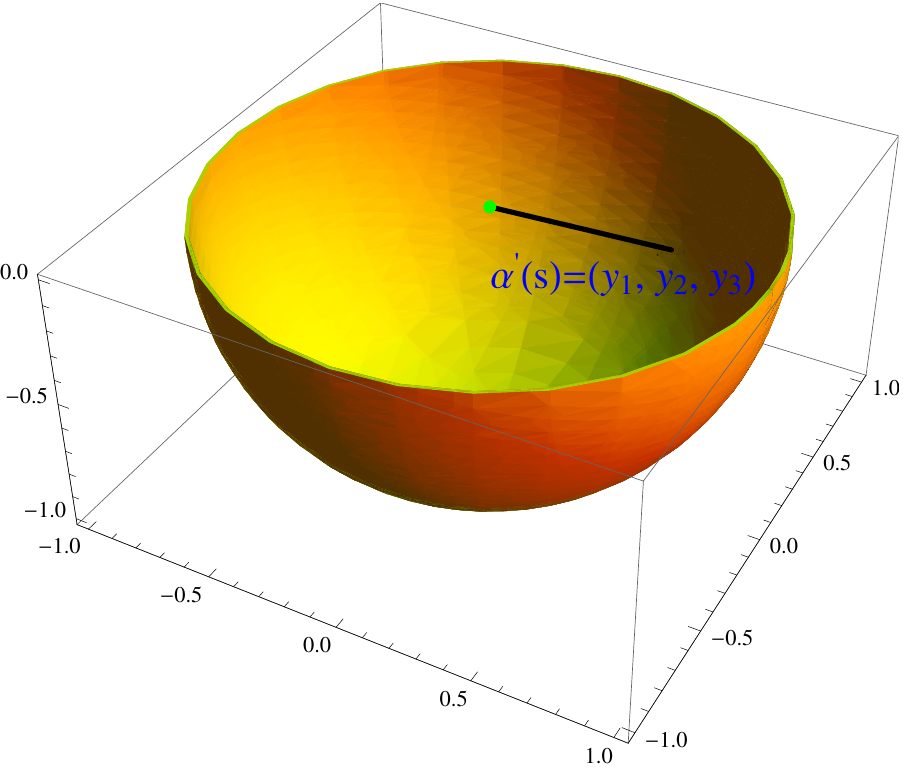}}
\caption{The unit tangent vector of a possible ramp in the plane lies in the lower part of the unit semicircle, if the ramp is free to move in the whole space, then the unit tangent vector lies in the lower hemisphere of the unit sphere}
\label{lhs}
\end{figure}

in order to completely determine the ramp we need to specify the desire direction of the normal to the ramp, see Figure \ref{planes}.  We can do this by choosing a unit normal vector field $H(y)$ defined in the lower hemisphere and requesting the normal vector of the ramp to be equal to $N(y)$ anytime the tangent unit vector to the ramp is the vector $y$. See Figure \ref{normalons}.

\begin{figure}[ht]
\centerline{\includegraphics[width=4.9cm,height=4.cm]{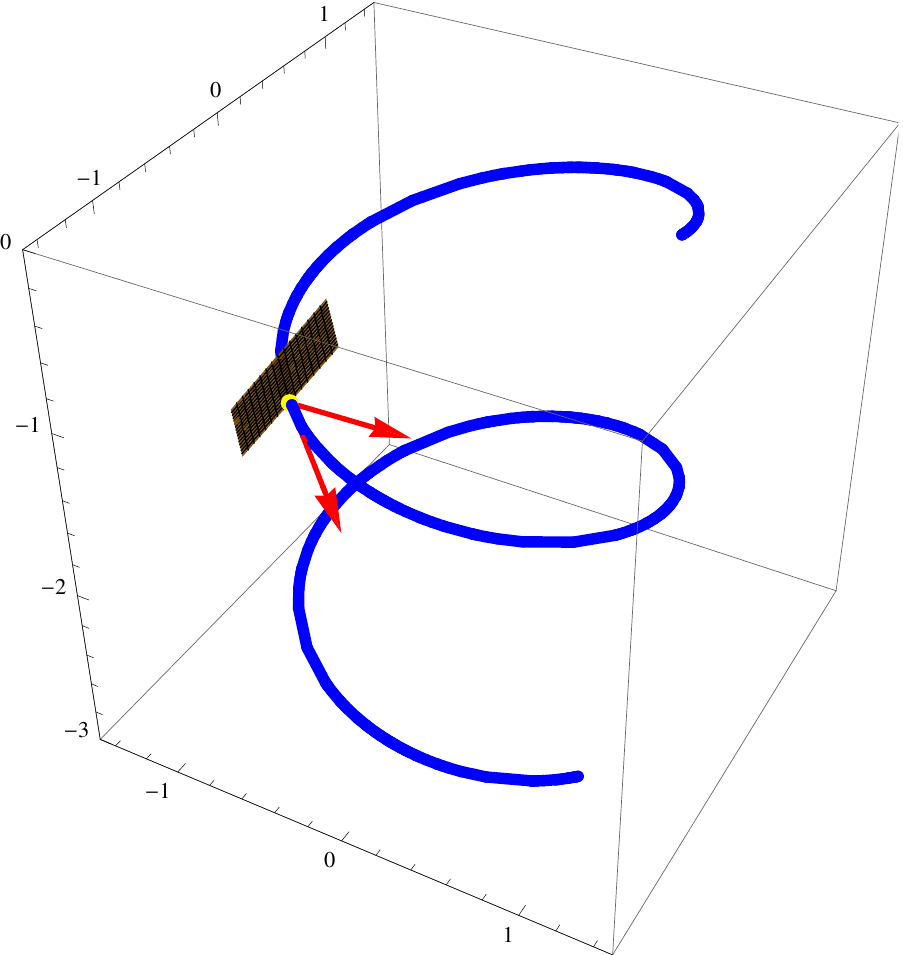}\includegraphics[width=4.6cm,height=4.6cm]{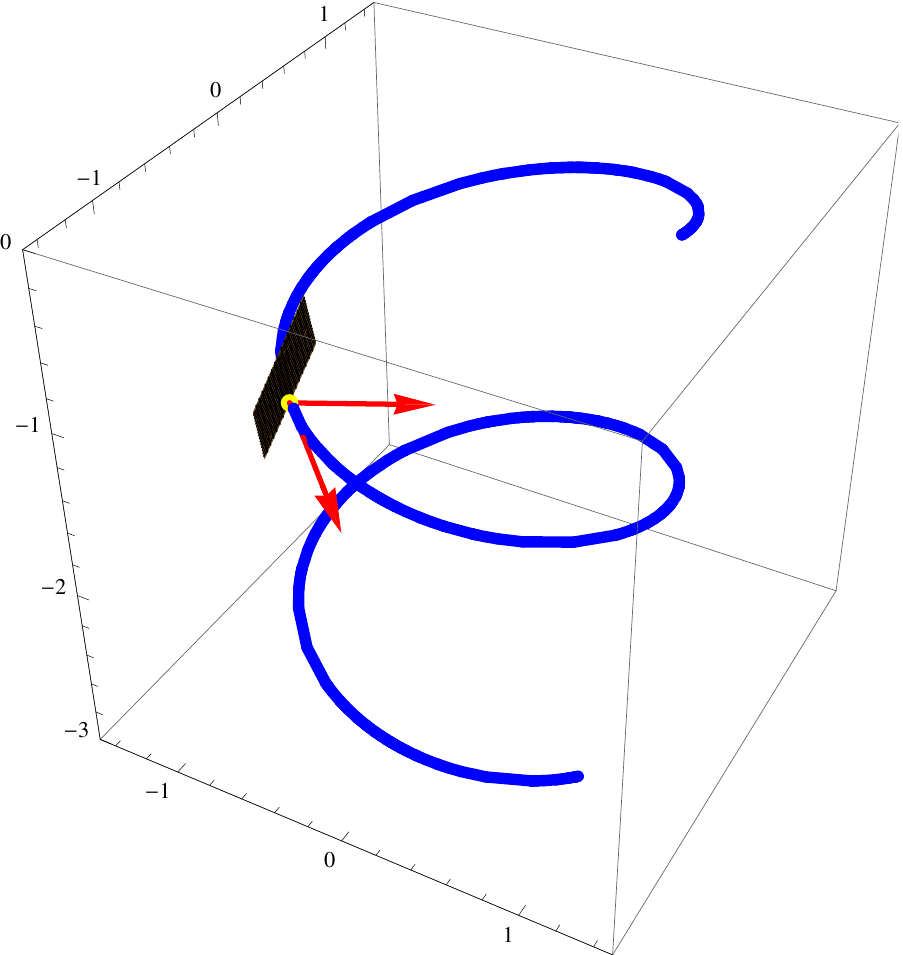}\includegraphics[width=4.6cm,height=4.6cm]{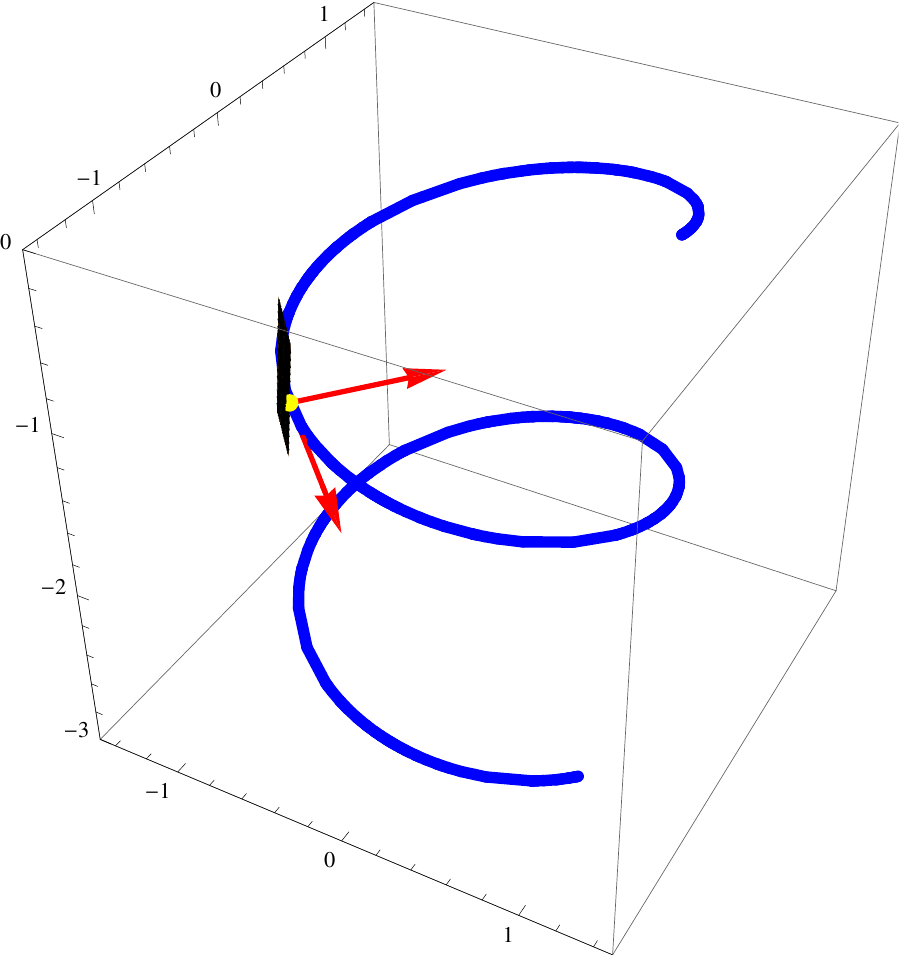}}
\caption{To built a ramp we need to decide about the normal direction at every point in the ramp. }
\label{planes}
\end{figure}

\begin{figure}[ht]
\centerline{\includegraphics[width=4.9cm,height=4.cm]{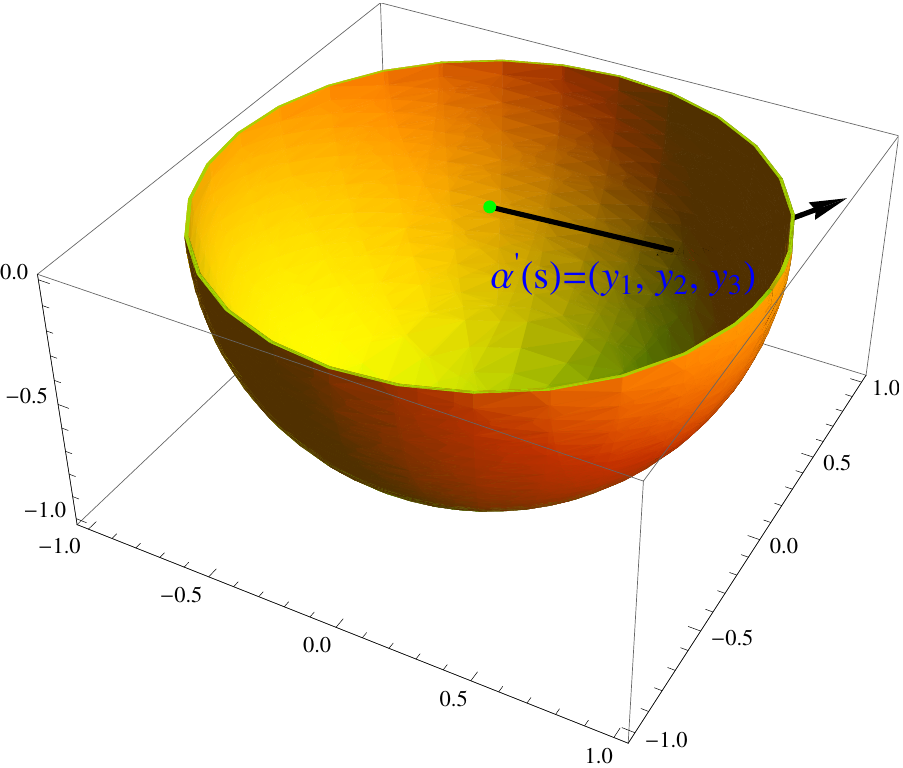}}
\caption{We will make the choice of the normal direction on the ramp to depend on the unit tangent vector.}
\label{normalons}
\end{figure}

 We will prove that for any continuous choice of a normal field $N(s)$ in the lower hemisphere, there exist a family of ramp on which a motion with constant speed is possible under the effect of the gravity force and the force of friction. More precisely we have.
 \begin{thm} Let $\Sigma=\{(y_1,y_2,y_3)\in \bfR{3}:y_1^2+y_2^2+y_3^2=1\, \hbox{and}\quad y_3\le0\}$ denote the south hemisphere and let $N:\Sigma\to \bfR{3}$ be any unit tangent vector field. This is, for any $y\in \Sigma$, $N(y)$ has norm 1 and  $N(y)$ is perpendicular to $y$. For any positive numbers  $v$, $\mu<1$ and any $y_0\in \Sigma$ there exists a unique curve $\alpha(s)$ parametrize by arc-length, such that $\alpha(0)=(0,0,0)$, $\alpha^\prime(0)=y_0$ and the motion along $\beta(t)=\alpha(vt)$ in the ramp given by 
 
 $$R(s,r)= \alpha(s)+r \, \, \alpha^\prime(s)\times N(\alpha^\prime(s))$$
 
 represents a motion that is a solution of Newton's second law under the assumption that the only forces action on this moving particle are gravity and a friction force with kinetic constant coefficient $\mu$. Recall that since the parameter $s$ is arc-length parameter,  then $|\beta^\prime(t)|=v$ for all $t$; this is, the motion has constant speed $v$.
 \end{thm}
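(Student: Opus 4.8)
The plan is to reduce the vector form of Newton's second law to an autonomous first-order ODE for the unit tangent $T(s)=\alpha'(s)$ on the sphere, and then invoke the standard existence-and-uniqueness theorem. First I would set $T(s)=\alpha'(s)$ and record the moving frame $\{T, N(T), w\}$ with $w = T\times N(T)$. Since $N(T)\cdot T=0$ and $|N(T)|=|T|=1$, this is a right-handed orthonormal frame at every point, and the one-line identity $T\times(T\times N(T)) = -N(T)$ shows that the unit normal to the ramp surface $R(s,r)$ along the curve $r=0$ is $\pm N(\alpha'(s))$, exactly as the construction intends.

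Next, writing $\beta(t)=\alpha(vt)$ gives $\beta'(t)=vT$ and $\beta''(t)=v^2 T'(s)$ with $s=vt$, so the friction direction is $\beta'/|\beta'|=T$ and the equation of Definition \ref{solwfric} becomes
$$(0,0,-mg)+\lambda\, N(T)-\mu\lambda\, T = mv^2\, T'.$$
I would then project this identity onto the three frame vectors. Dotting with $T$ (using $T'\cdot T=0$ and $N(T)\cdot T=0$) isolates $\lambda = -mg\,T_3/\mu$, where $T_3$ is the vertical component of $T$; note $\lambda\ge0$ precisely when $T_3\le0$. Dotting with $N(T)$ and with $w$ prescribes the two components of $T'$ in the plane $T^{\perp}$, namely $T'\cdot N(T) = (\lambda-mg\,N_3)/(mv^2)$ and $T'\cdot w = -g\,w_3/v^2$. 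Because $T'\cdot T=0$ holds automatically, these determine $T'$ completely and yield an autonomous ODE $T'=F(T)$ on $\Sigma$, where $F$ is assembled from $N$ and the constants $m,g,v,\mu$.

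With the ODE in hand, existence and uniqueness follow from the Picard existence and uniqueness theorem applied to $F$ with $T(0)=y_0$, and the curve is recovered as $\alpha(s)=\int_0^s T(u)\,du$, which satisfies $\alpha(0)=0$, $\alpha'(0)=y_0$, and $|\alpha'|\equiv1$ since $F(T)\perp T$ forces $\frac{d}{ds}|T|^2=0$. Two points need care. First, to keep $\lambda\ge0$ (required by Definition \ref{solwfric}) and to keep $N(T)$ defined, I would verify that the solution stays in the southern hemisphere: at the equator $T_3=0$ the decomposition $(0,0,1)=T_3 T + N_3 N(T) + w_3 w$ gives $N_3^2+w_3^2=1$, whence $T_3' = T'\cdot(0,0,1) = -g/v^2<0$, so the flow never crosses the equator outward and $\Sigma$ is forward-invariant. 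Second --- and this is the only genuinely delicate point --- uniqueness requires $F$, hence $N$, to be Lipschitz; mere continuity yields existence by Peano but not uniqueness, so I would state the result under the standing hypothesis that $N$ is Lipschitz (for instance $C^1$), which is the natural regularity for this construction. Everything else reduces to the routine projection computation sketched above.
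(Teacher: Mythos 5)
Your proposal is correct and is essentially the paper's own proof read in the opposite direction: the field $F(T)$ you derive by projecting Newton's law onto the frame $\{T,\,N(T),\,T\times N(T)\}$ is exactly the tangent vector field $X=-\frac{g}{v^2}\left(e_3^T+\frac{y_3}{\mu}\,N(y)\right)$ that the paper writes down first and then verifies against Newton's law, and your equator computation $T_3'=-g/v^2<0$ is precisely the paper's observation that $X$ points into $\Sigma$ along the boundary $y_3=0$, giving forward invariance of the south hemisphere. One remark: your Lipschitz caveat is not pedantry but an actual sharpening of the paper, which assumes only a continuous tangent field $N$ yet speaks of \emph{the} integral curve through $y_0$ and asserts uniqueness of $\alpha$ --- something Peano's theorem alone cannot deliver --- so the regularity hypothesis you add (e.g.\ $N$ Lipschitz or $C^1$) is exactly what is needed to make the uniqueness claim of the theorem legitimate.
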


\begin{proof}
Recall that we are denoting by $u\cdot w$ the dot (or inner) product of the two vectors $u$ and $w$. A direct computation shows that if  $e_3=(0,0,1)$, then $e_3^T:\Sigma\to \bfR{3}$ given by $e_3^T(y)=e_3-(e_3\cdot y) \, y$ is a tangent vector field (notice that $e_3^T(y)$ must be in the tangent space $T_y\Sigma$ because $e_3^T(y)\cdot y=0$)

Let $\lambda:\Sigma\to{\bf R}$ be the function given by 

$$\lambda(y)=-\frac{g}{ \mu} e_3\cdot y=-\frac{g}{ \mu} y_3$$

 Notice that since $y_3\le0$ then $\lambda\ge 0$ and $\lambda$ only vanishes on the boundary of $\Sigma$, (recall that the boundary of $\Sigma$ is given by the equation $y_3=0$). Let us consider the tangent vector field 

\begin{eqnarray}\label{vector field}
X=-\frac{1}{v^2} \left( g e_3^T - \lambda N(y)\right)=-\frac{g}{v^2} \left( e_3^T + \frac{y_3}{\mu} N(y)\right)
\end{eqnarray}

Notice that along points in the boundary of $\Sigma$, $X(y)=-\frac{g}{v^2}\, e_3$ points toward  $\Sigma$. It follows that any integral curve of the tangent vector field $X$ remain in $\Sigma$.  Let  $\gamma(s)$ be the integral curve of the vector field $X$ that satisfies $\gamma(0)=y_0$.  We will prove the Theorem by showing the curve  $\alpha(s)$ given by

$$\alpha(s)=\int_0^s\, \gamma(u)\, du$$

satisfies all the conditions. Clearly, $\alpha(0)=(0,0,0)$ and $\alpha^\prime(0)=\gamma(0)=y_0$. We also have that $s$ is arc-length parameter because $|\alpha^\prime(s)|=|\gamma(s)|=1$. A direct computation show that the normal vector of the surface (or ramp) $R$ along the curve $\alpha$ (along points in the surface with $r=0$) is given by $N(\alpha(s))$. We will now proof that Newton's second law holds true for $\beta(t)=\alpha(vt)$. Notice that since $v$ is constant, then $\beta^\prime(t)=v\alpha^\prime(vt) =v\gamma(s)$ and $\beta^{\prime\prime}(vt)=v^2\gamma^\prime(vt)=v^2\gamma^\prime(s)$. If $m$ denotes the mass of the particle and $\lambda$ is defined as in the beginning of the proof, we have that 

\begin{eqnarray*}
m\beta^{\prime\prime}(t)=m v^2 \gamma^\prime(s)&=&-mv^2\frac{1}{v^2} \left( g e_3^T(\gamma)-\lambda N(\gamma)\right)\\
&= &m(-g e_3+ g (e_3\cdot \gamma)\, \gamma -  \frac{g}{\mu} (e_3\cdot \gamma)\, N(\gamma(t)))\\
&=& -gme_3+m \lambda N(\gamma(s))- m \lambda \mu \gamma(s)
\end{eqnarray*}

From the equation above we conclude that then normal force made from the ramp to the block has magnitude $m\lambda$. Notice that the last equation above is Newton's second law. 

It is clear the the form of the ramp depends on the desire constant speed that we want on the ramp. For example if the block is to travel outside down on some portion of the ramp and the speed $v$ is not much then the curvature of that portion of the ramp must be big so that the block does not fall off the ramp. The following corollary explains how to change the ramp if we want to change the constant speed or if we want to change the gravity force.
\end{proof}
 
\begin{cor}
{\bf a.)} If a ramp $R\subset \bfR{3}$ does the work of allowing a block to move down with constant speed $v$ and  gravity $g$, then, for any positive $\kappa$ the ramp $\kappa R=\{\kappa (x,y,z): (x,y,z)\in R\}$ does the work of allowing a block to move down with constant speed $\sqrt{\kappa} v$ and gravity $g$.

{\bf b.)} If a ramp $R\subset \bfR{3}$ does the work of allowing a block to move down with constant speed $v$ and gravity $g$, then, for any positive $\kappa$ the ramp $\kappa R=\{\kappa (x,y,z): (x,y,z)\in R\}$ does the work of allowing a block to move down with constant speed 
$v$ and gravity $\frac{g}{\kappa}$.

\end{cor}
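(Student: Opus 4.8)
The plan is to prove both parts at once by a single scaling argument and then specialize a free parameter. The starting point is to make precise what it means for $R$ to "do the work": by Definition \ref{solwfric} (as realized in the Theorem) there is an arc-length parametrized curve $\alpha$ whose image lies on $R$, together with a nonnegative function $\lambda$, so that the motion $\beta(t)=\alpha(vt)$ satisfies
$$(0,0,-mg)+\lambda(t)\,n(t)-\mu\,\lambda(t)\,\frac{\beta^\prime(t)}{|\beta^\prime(t)|}=m\,\beta^{\prime\prime}(t),$$
where $n(t)$ is the unit normal of $R$ along $\beta$ and $|\beta^\prime|=v$. The coefficient $\mu$ is dimensionless and is held fixed throughout.

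Next I would write down the candidate motion on the dilated ramp. Since $\kappa R=\{\kappa p:p\in R\}$, for any constant $c>0$ the curve $\tilde\beta(t)=\kappa\,\beta(ct)$ is a motion whose image lies on $\kappa R$. First I record the elementary scaling facts $\tilde\beta^\prime(t)=\kappa c\,\beta^\prime(ct)$ and $\tilde\beta^{\prime\prime}(t)=\kappa c^{2}\,\beta^{\prime\prime}(ct)$, so that the speed $|\tilde\beta^\prime(t)|=\kappa c\,v$ is constant. The crucial geometric observation is that a dilation by $\kappa$ changes neither unit normals nor unit tangents at corresponding points; hence the unit normal of $\kappa R$ along $\tilde\beta$ is again $n(ct)$, and $\tilde\beta^\prime(t)/|\tilde\beta^\prime(t)|=\beta^\prime(ct)/|\beta^\prime(ct)|$. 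This is what allows the same $\mu$ and the same normal data to be reused.

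The verification is then immediate. I would evaluate the original Newton law at time $ct$ and multiply it through by the scalar $\kappa c^{2}$; using $m\tilde\beta^{\prime\prime}(t)=m\kappa c^{2}\beta^{\prime\prime}(ct)$ and the scale-invariance just noted, the resulting identity is exactly Newton's law for $\tilde\beta$ on $\kappa R$, with gravity $\tilde g=\kappa c^{2}g$ and normal coefficient $\tilde\lambda(t)=\kappa c^{2}\lambda(ct)$. Since $\kappa c^{2}>0$ and $\lambda\ge0$, we have $\tilde\lambda\ge0$, as Definition \ref{solwfric} requires. Thus for every $c>0$ the dilated ramp admits a constant-speed solution with speed $\kappa c\,v$, gravity $\kappa c^{2}g$, and the same $\mu$.

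Finally I specialize the free parameter. Part a.) follows by choosing $c=1/\sqrt{\kappa}$, which yields $\tilde g=g$ and speed $\sqrt{\kappa}\,v$; part b.) follows by choosing $c=1/\kappa$, which yields speed $v$ and $\tilde g=g/\kappa$. I expect the only delicate points to be bookkeeping rather than genuine mathematics: confirming the scale-invariance of the unit normal field and of the unit tangent (so that $\mu$ is unaffected), and checking the sign $\tilde\lambda\ge0$. The reason everything collapses to a one-parameter family is essentially dimensional: the shape of a constant-speed ramp depends only on the ratio $g/v^{2}$, and a dilation by $\kappa$ rescales this ratio by $1/\kappa$.
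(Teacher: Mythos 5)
Your proof is correct, but it takes a genuinely different route from the paper's. The paper deduces the corollary from its classification machinery: it observes that if $\gamma$ is an integral curve of the tangent vector field $X$ of Equation (\ref{vector field}), then $\tilde{\gamma}(\tau)=\gamma(\tau/\kappa)$ is an integral curve of $\frac{1}{\kappa}X$; since $X$ depends on $v$ and $g$ only through the ratio $g/v^{2}$, the field $\frac{1}{\kappa}X$ is exactly the field associated to the data $(\sqrt{\kappa}\,v,\,g)$ or to $(v,\,g/\kappa)$, and integrating gives $\tilde{\alpha}(s)=\int_0^s\tilde{\gamma}(u)\,du=\kappa\,\alpha(s/\kappa)$, whose associated ramp is the dilated ramp $\kappa R$. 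You instead work directly with Newton's law in the form of Definition \ref{solwfric}: you dilate in space and rescale time, $\tilde{\beta}(t)=\kappa\,\beta(ct)$, note that unit tangents and unit normals (hence the friction direction and the coefficient $\mu$) are invariant under dilation, and multiply the original equation at time $ct$ by $\kappa c^{2}$, reading off $\tilde{g}=\kappa c^{2}g$, $\tilde{\lambda}(t)=\kappa c^{2}\lambda(ct)\ge 0$, and speed $\kappa c\,v$; the two parts follow by specializing $c=1/\sqrt{\kappa}$ and $c=1/\kappa$. Both are scaling arguments resting on the fact that the problem depends only on $g/v^{2}$, but yours is more elementary and more general: it uses only the defining property of a solution (Newton's law with friction and $\lambda\ge 0$), not the vector-field construction of the theorem, so it applies verbatim to any ramp that ``does the work,'' and it handles the nonnegativity of the normal-force coefficient explicitly. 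What the paper's route buys in exchange is the extra structural information that the dilated ramp is again a ramp produced by the theorem's construction from the same normal field $N$, with the new generating curve given explicitly as $\kappa\,\alpha(s/\kappa)$.
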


\begin{proof}
This corollary is a consequence of Equation (\ref{vector field}). We can check that if $\gamma(t)$ is an integral curve of the vector field $X$, and $\alpha(s)=\int_0^s\, \gamma(u)\, du$, then $\tilde{\gamma}(\tau)=\gamma(\frac{\tau}{\kappa })$ is a solution of the vector field $\frac{1}{\kappa} \, X$, which can be interpreted as either the vector field coming from the tangent unit vector field $N(y)$ with velocity $\sqrt{\kappa}\, v$ and gravity $g$ or it can be interpreted as the vector field coming from the tangent unit vector field $N(y)$, with velocity $v$ and gravity $\frac{g}{\kappa }$. The result follows by noticing that 

$$\tilde{\alpha}(s)=\int_0^s\, \tilde{\gamma}(u)\, du=\int_0^s\, \gamma(\frac{u}{\kappa})\, du=
\kappa\, \int_0^\frac{s}{\kappa}\, \gamma(v)\, dv=\kappa\, \alpha(\frac{s}{\kappa})$$

\end{proof}

\begin{rem}
If a ramp $R$ on Earth has the property than an object will fall down with constant speed $v$, then, if we dilate this ramp by a factor of $6$, the same block will move down with the same constant speed $v$ on this dilated ramp when it is placed on the Moon.
\end{rem}

\vfil
\eject

\begin{figure}[ht]
\centerline{\includegraphics[width=4.6cm,height=4.6cm]{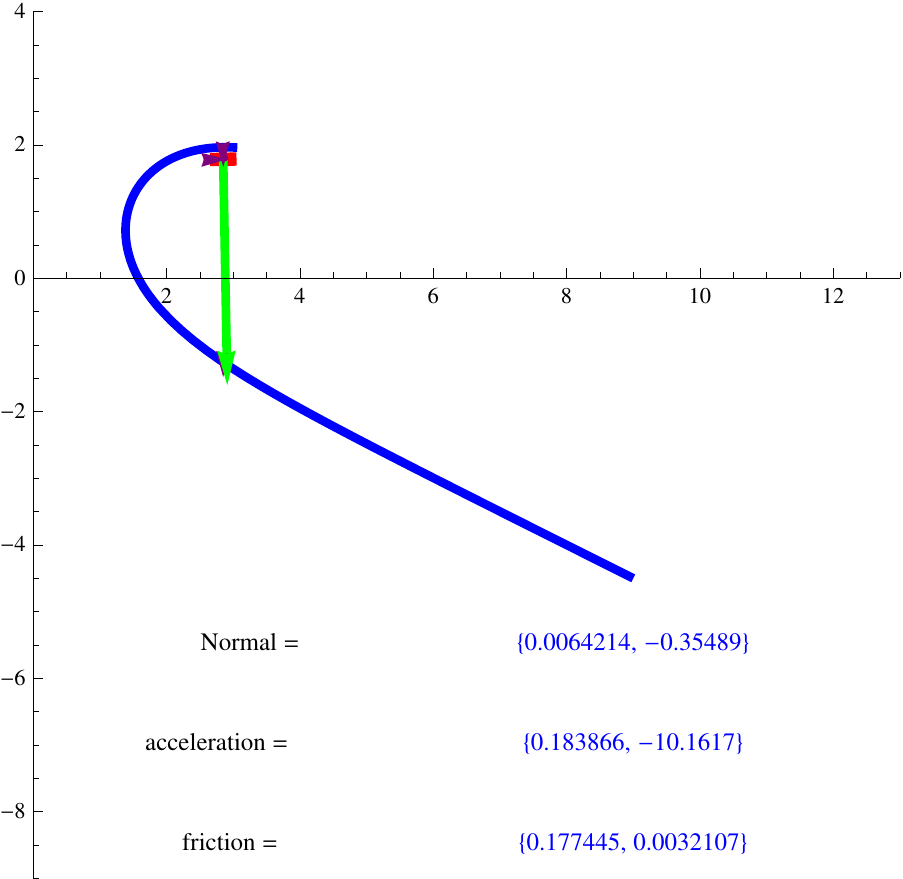}\includegraphics[width=4.6cm,height=4.6cm]{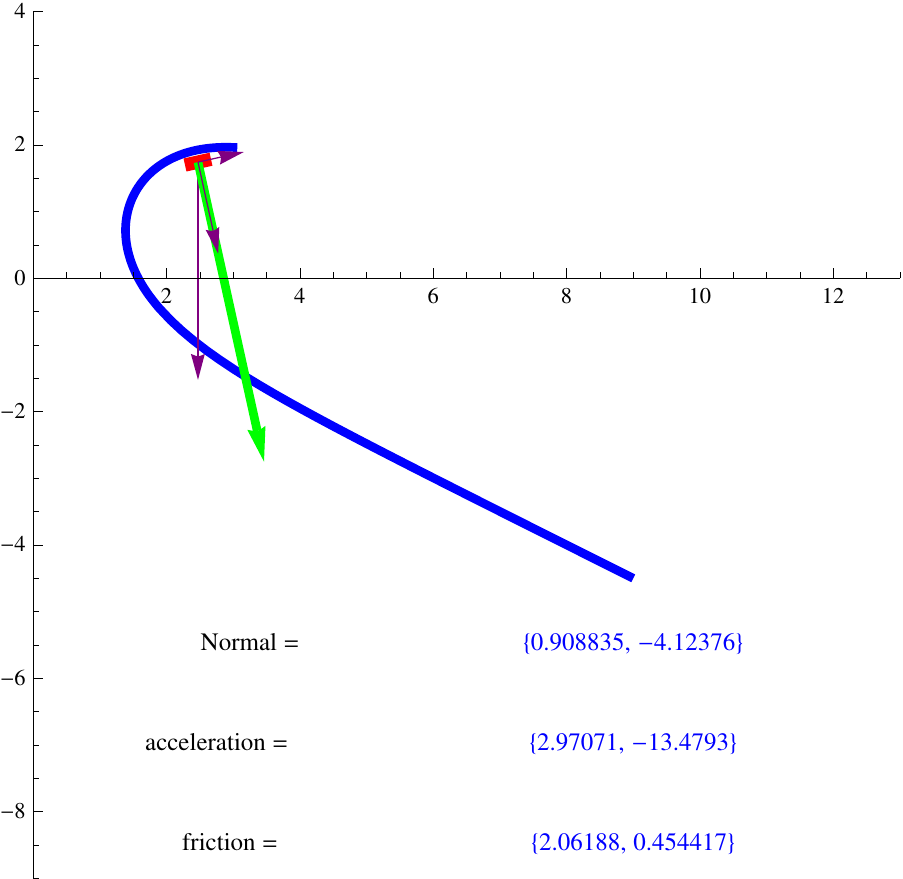}\includegraphics[width=4.6cm,height=4.6cm]{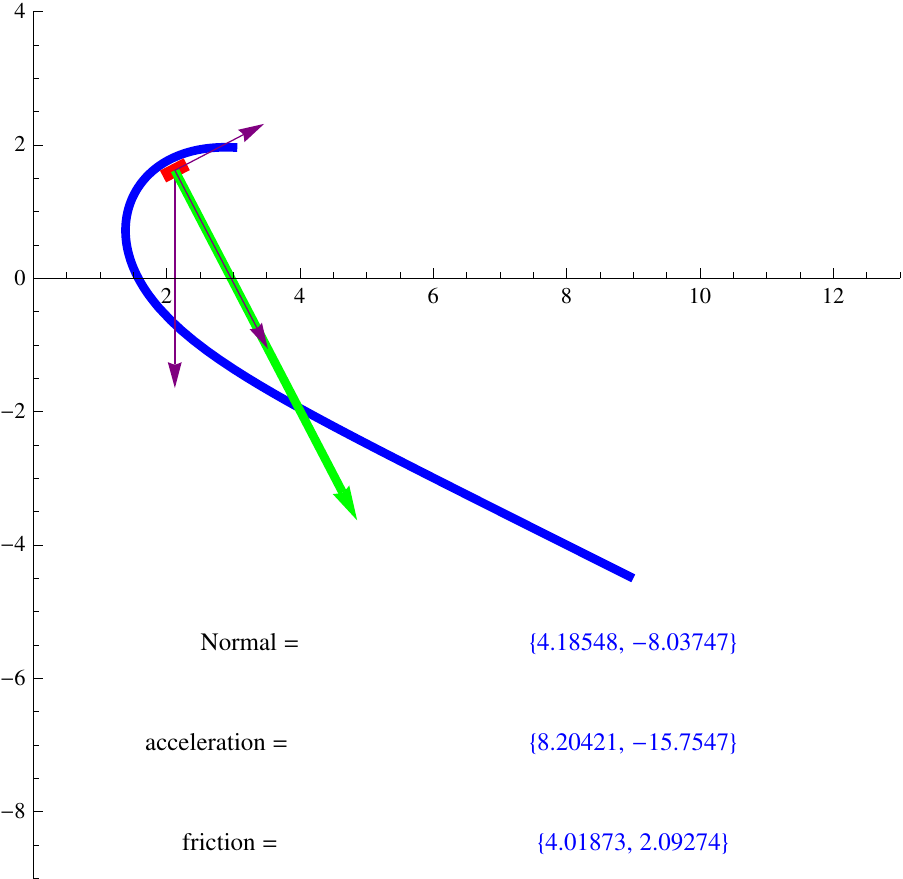}\includegraphics[width=4.6cm,height=4.6cm]{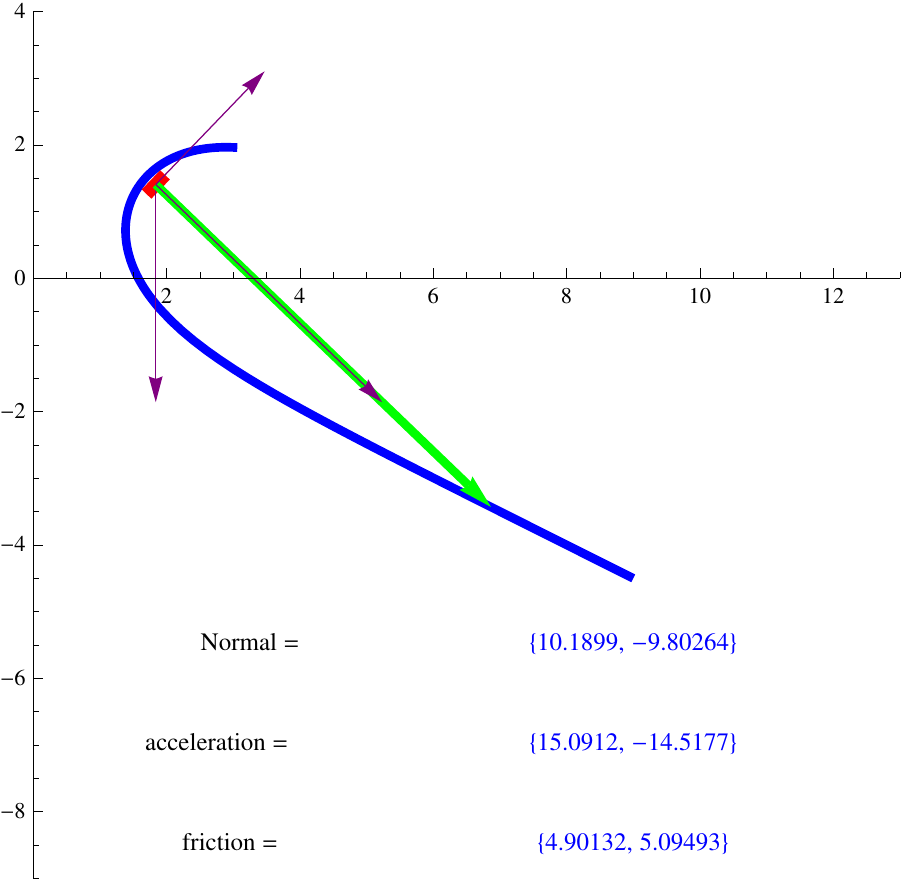}}
\end{figure}
\begin{figure}[ht]
\centerline{\includegraphics[width=4.6cm,height=4.6cm]{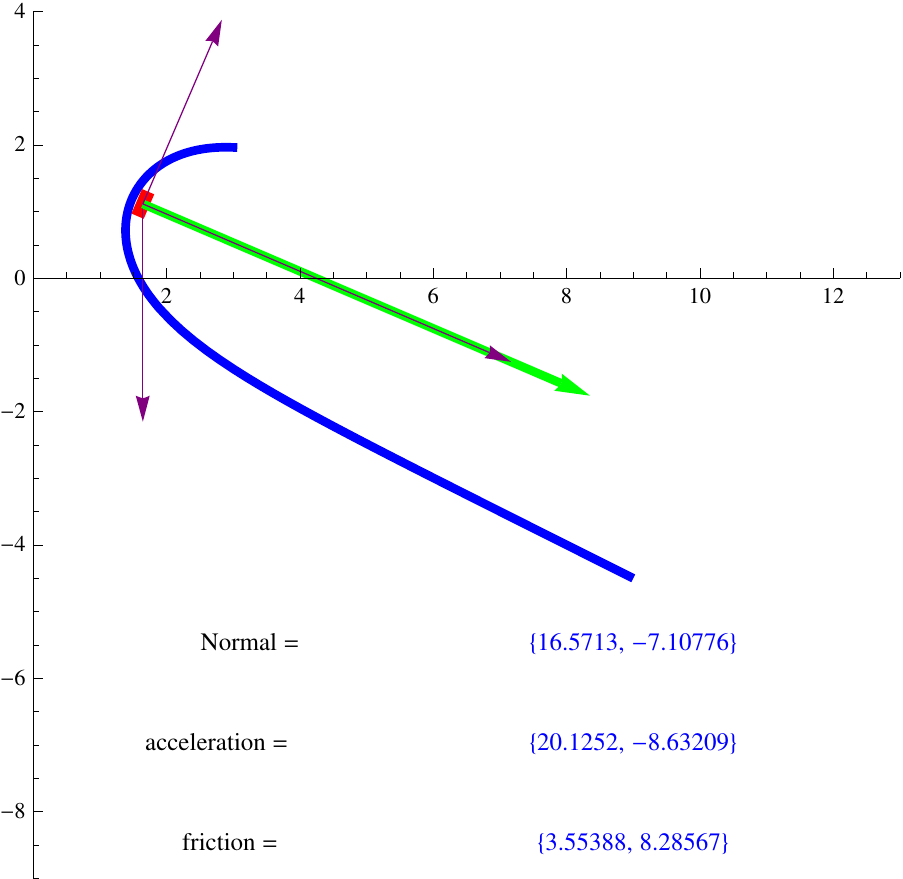}\includegraphics[width=4.6cm,height=4.6cm]{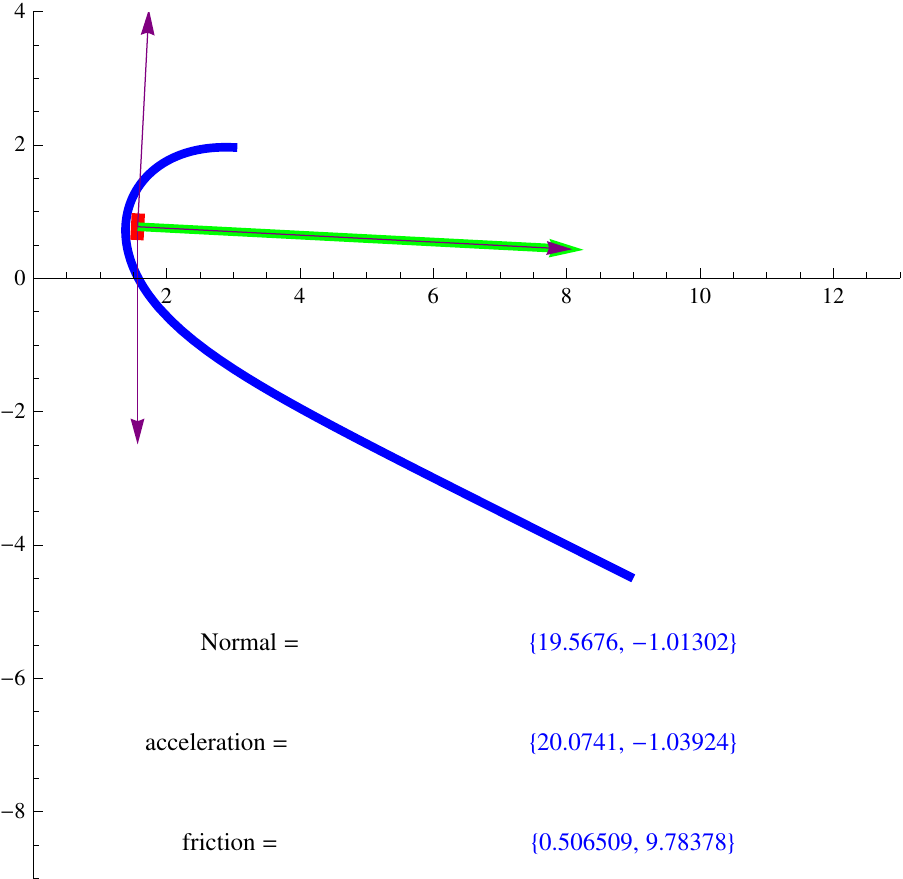}\includegraphics[width=4.6cm,height=4.6cm]{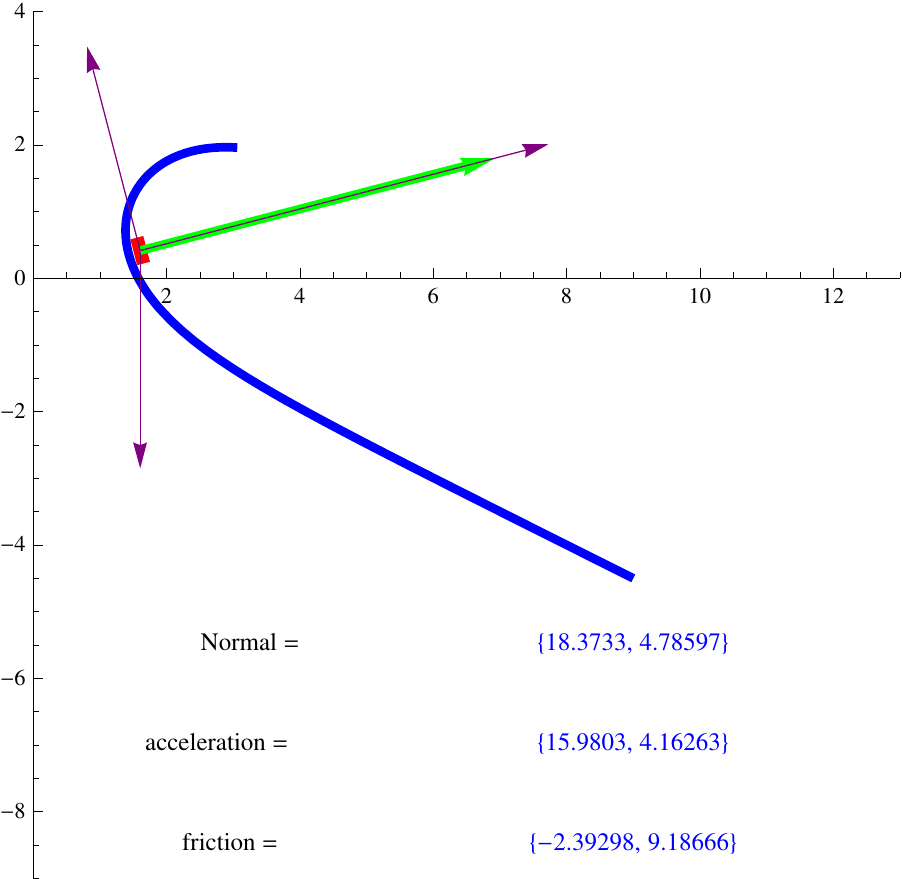}\includegraphics[width=4.6cm,height=4.6cm]{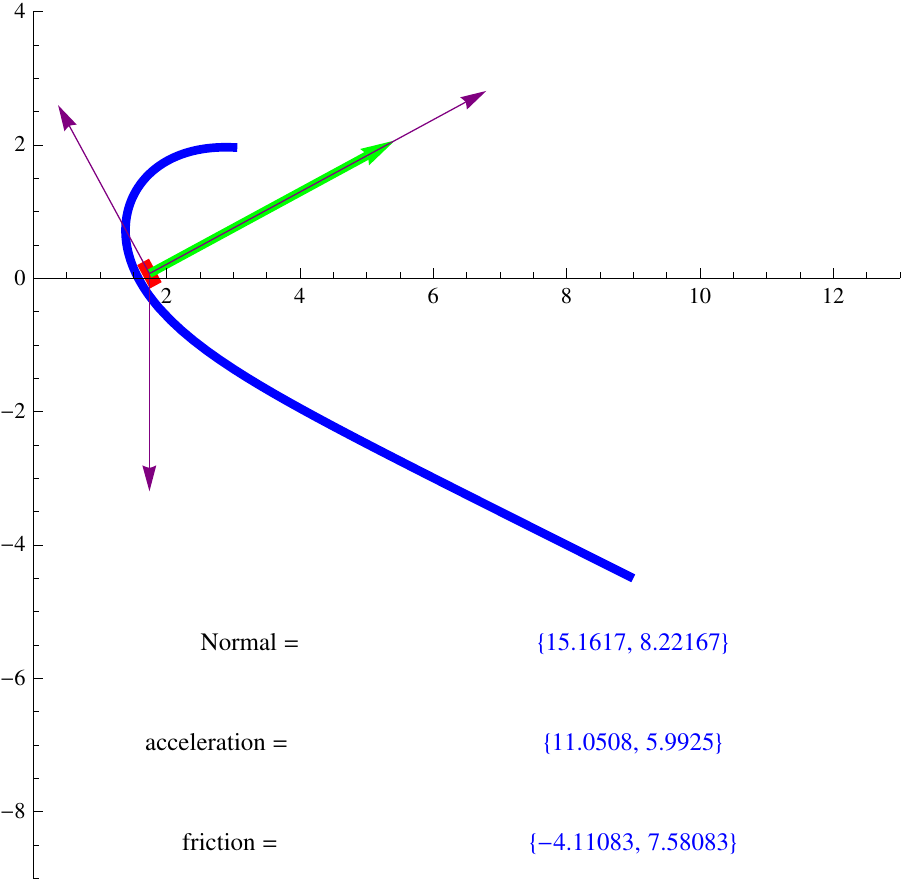}}
\end{figure}
\begin{figure}[ht]
\centerline{\includegraphics[width=4.6cm,height=4.6cm]{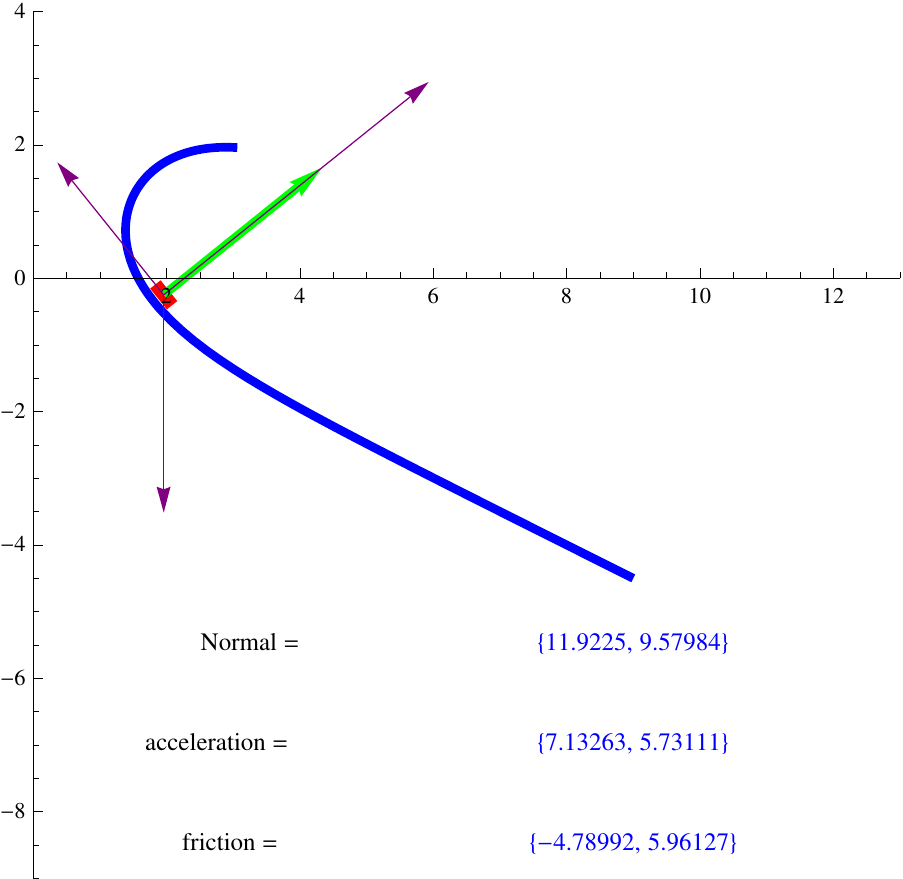}\includegraphics[width=4.6cm,height=4.6cm]{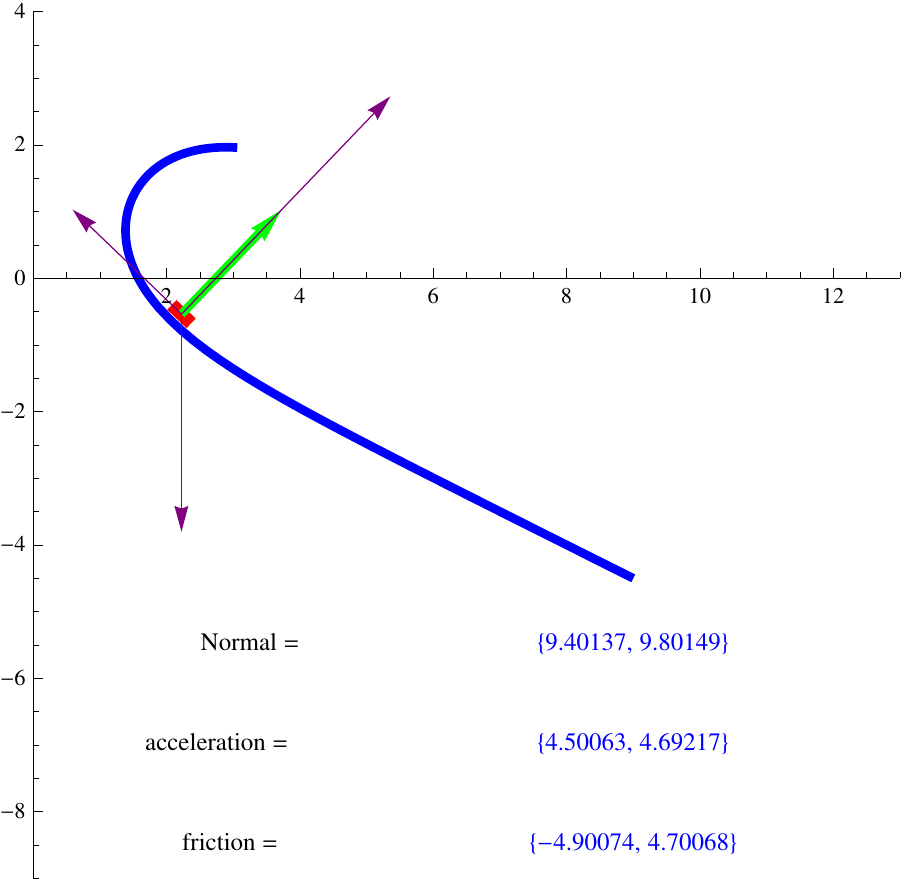}\includegraphics[width=4.6cm,height=4.6cm]{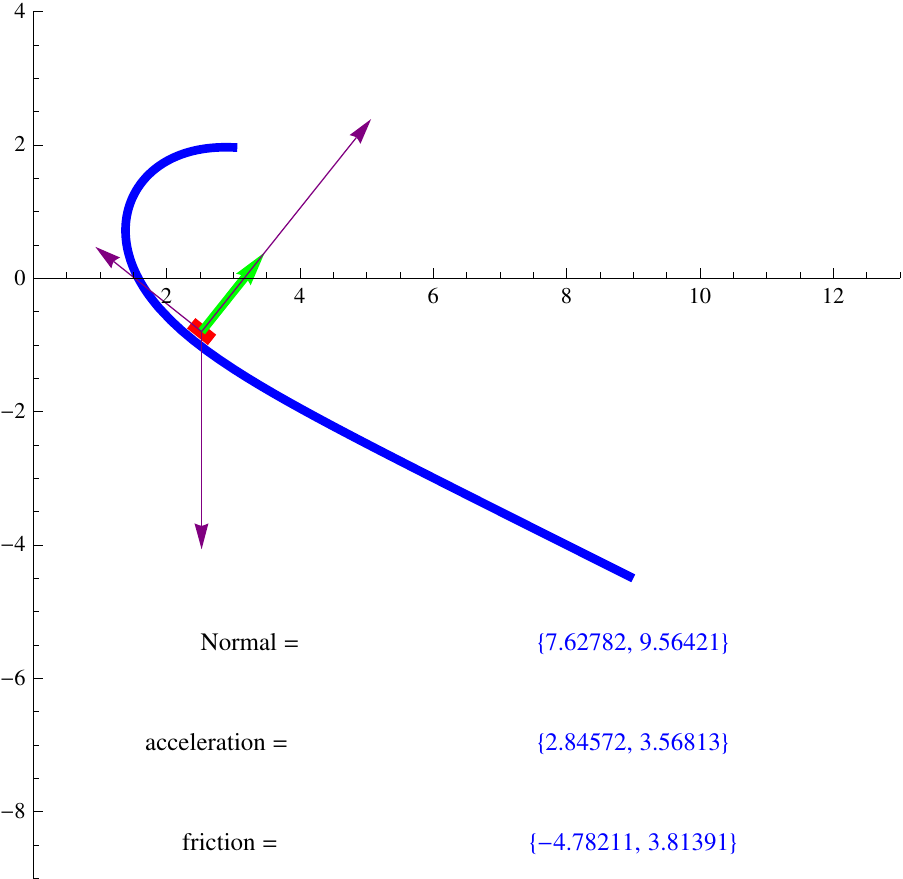}\includegraphics[width=4.6cm,height=4.6cm]{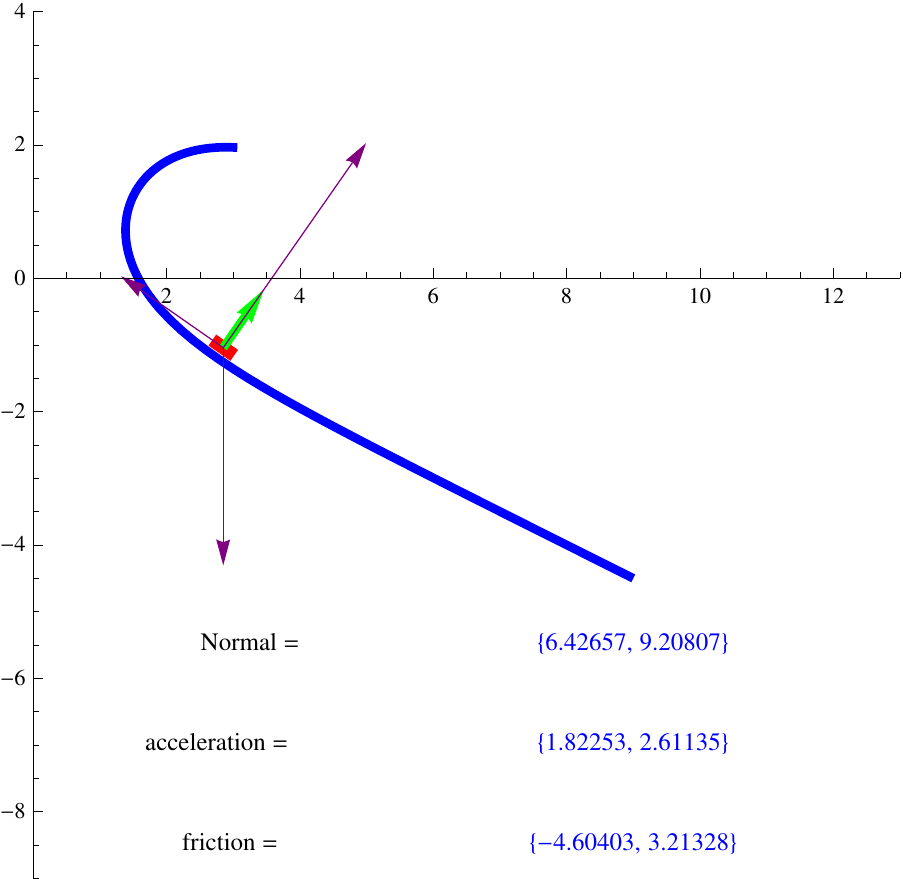}}
\end{figure}
\begin{figure}[ht]
\centerline{\includegraphics[width=4.6cm,height=4.6cm]{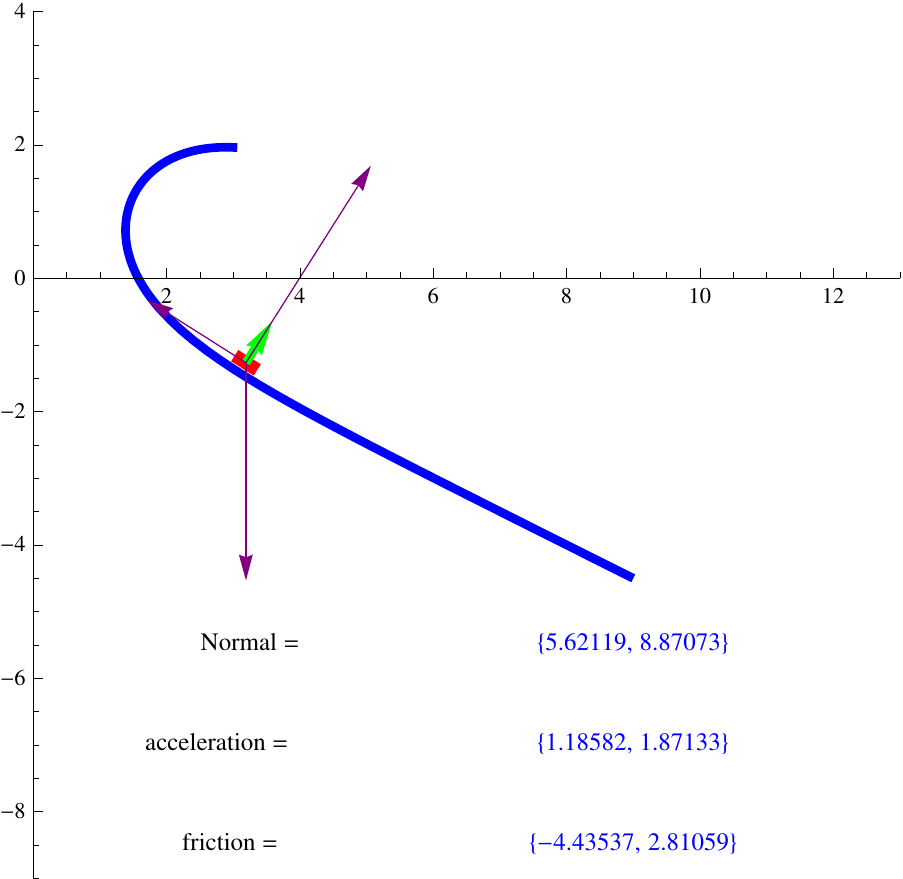}\includegraphics[width=4.6cm,height=4.6cm]{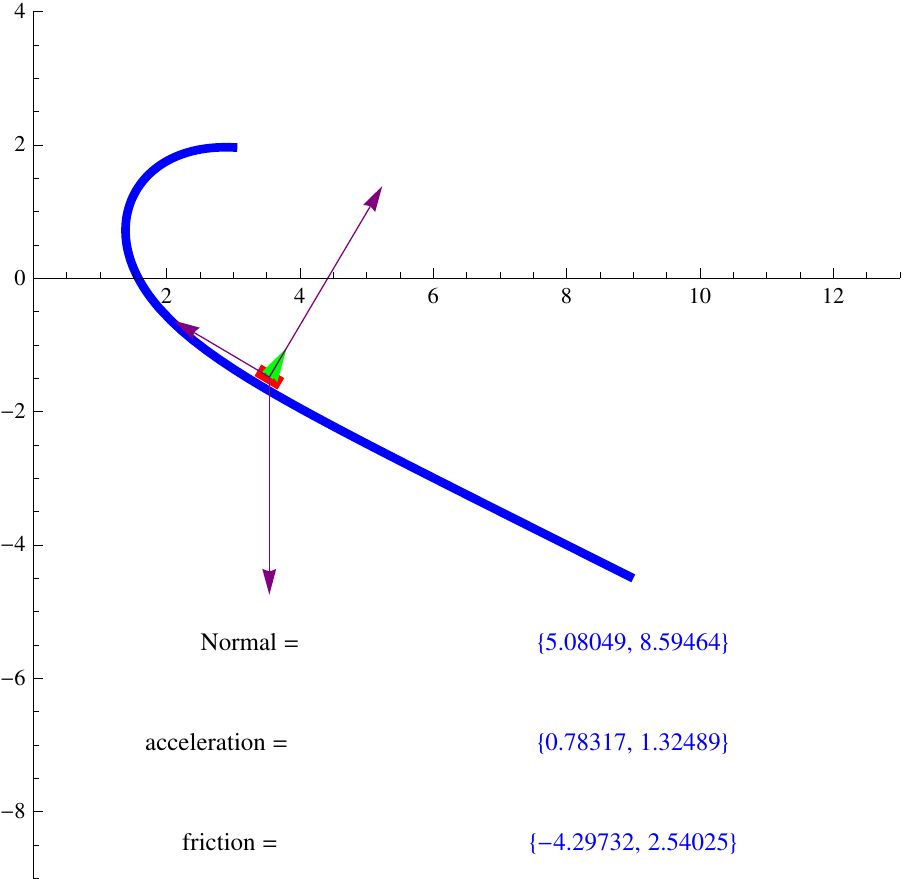}\includegraphics[width=4.6cm,height=4.6cm]{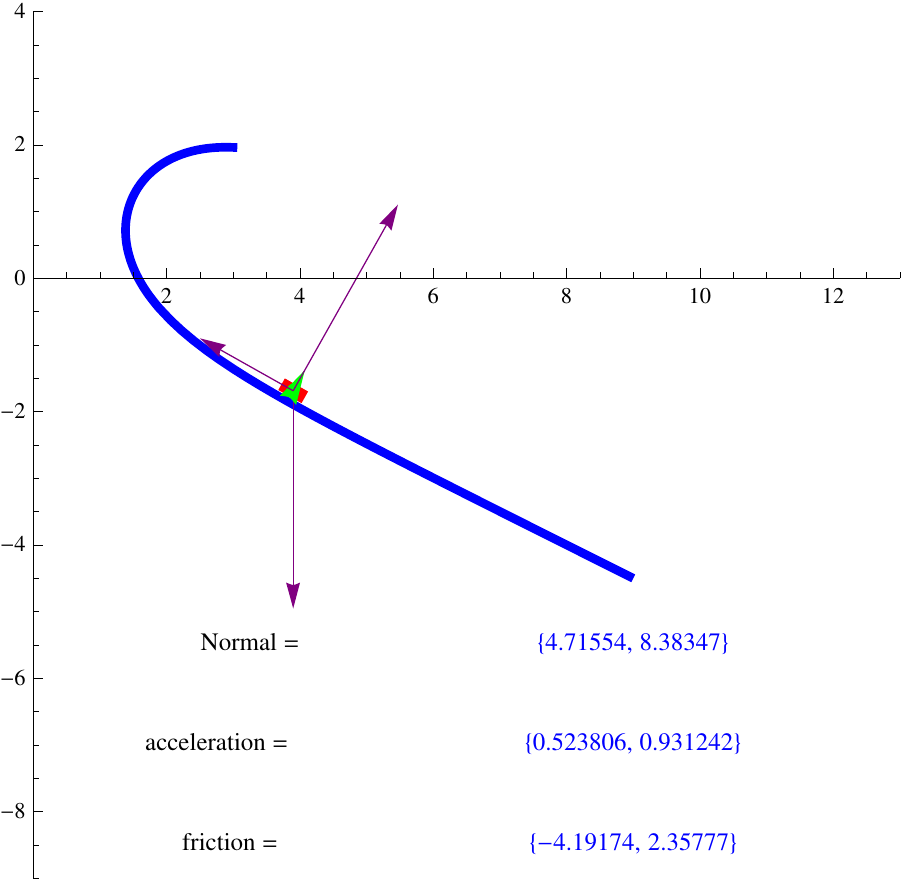}\includegraphics[width=4.6cm,height=4.6cm]{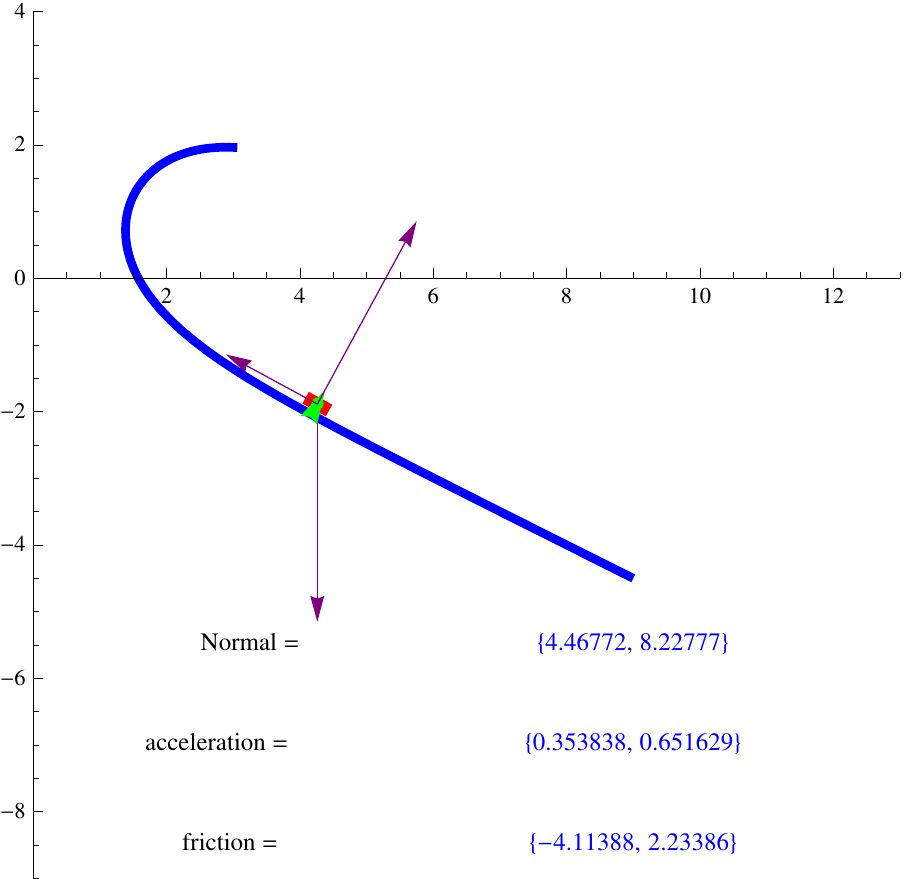}}
\end{figure}
\begin{figure}[ht]
\centerline{\includegraphics[width=4.6cm,height=4.6cm]{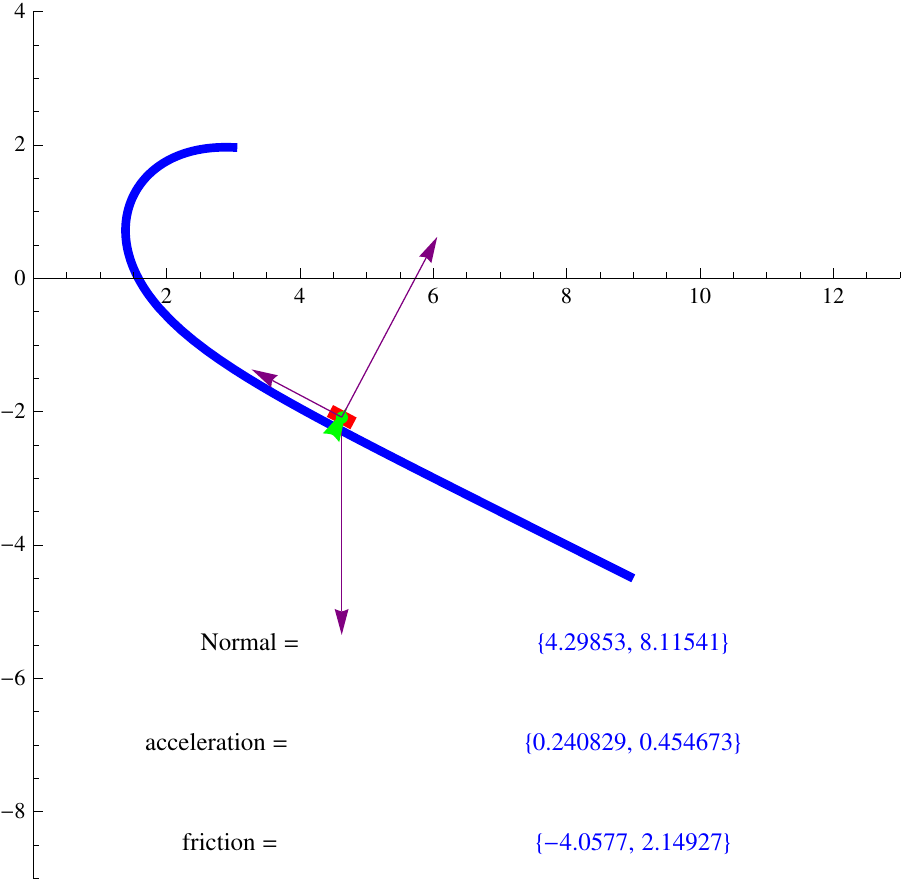}\includegraphics[width=4.6cm,height=4.6cm]{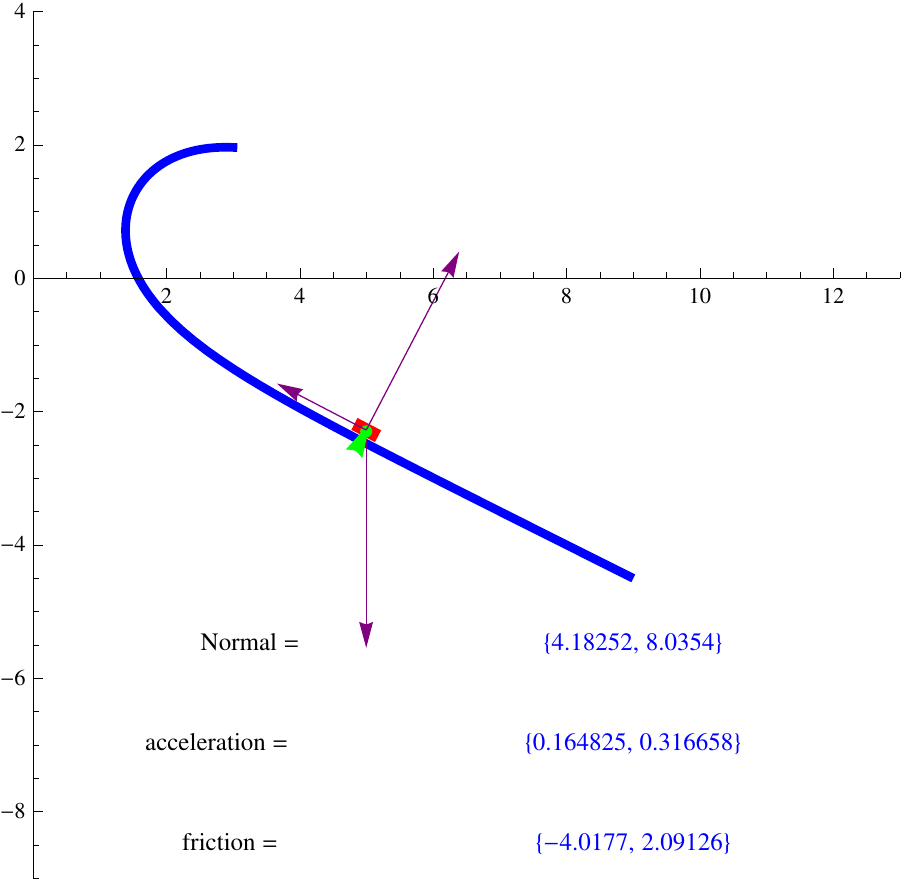}\includegraphics[width=4.6cm,height=4.6cm]{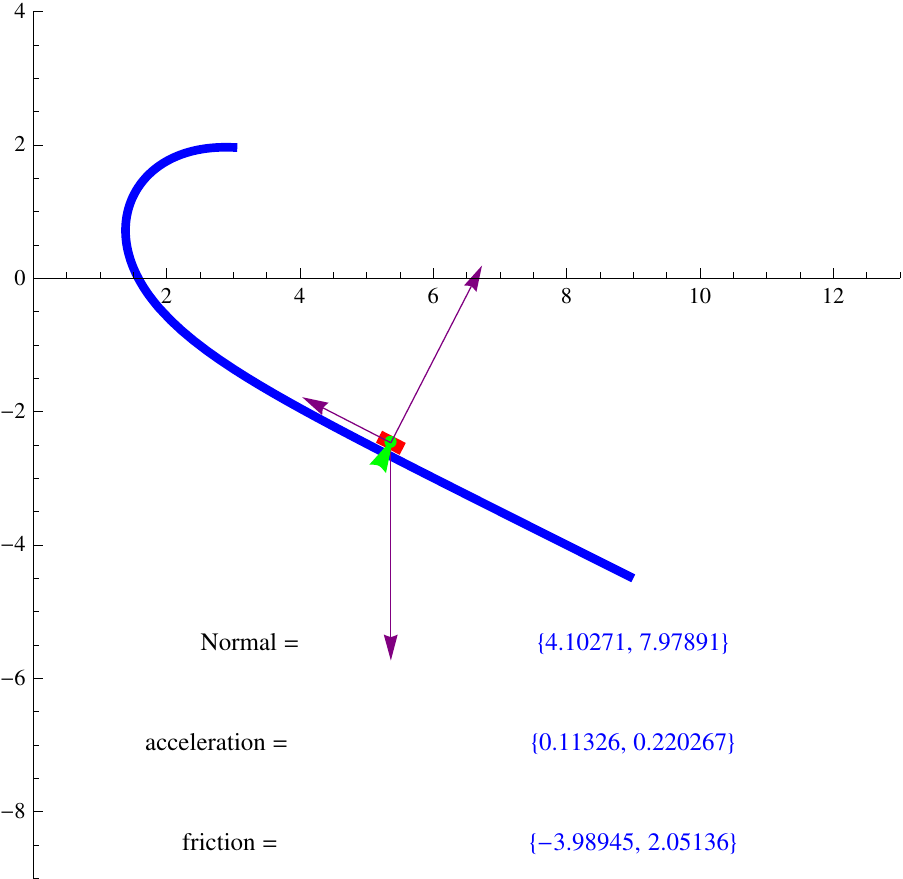}\includegraphics[width=4.6cm,height=4.6cm]{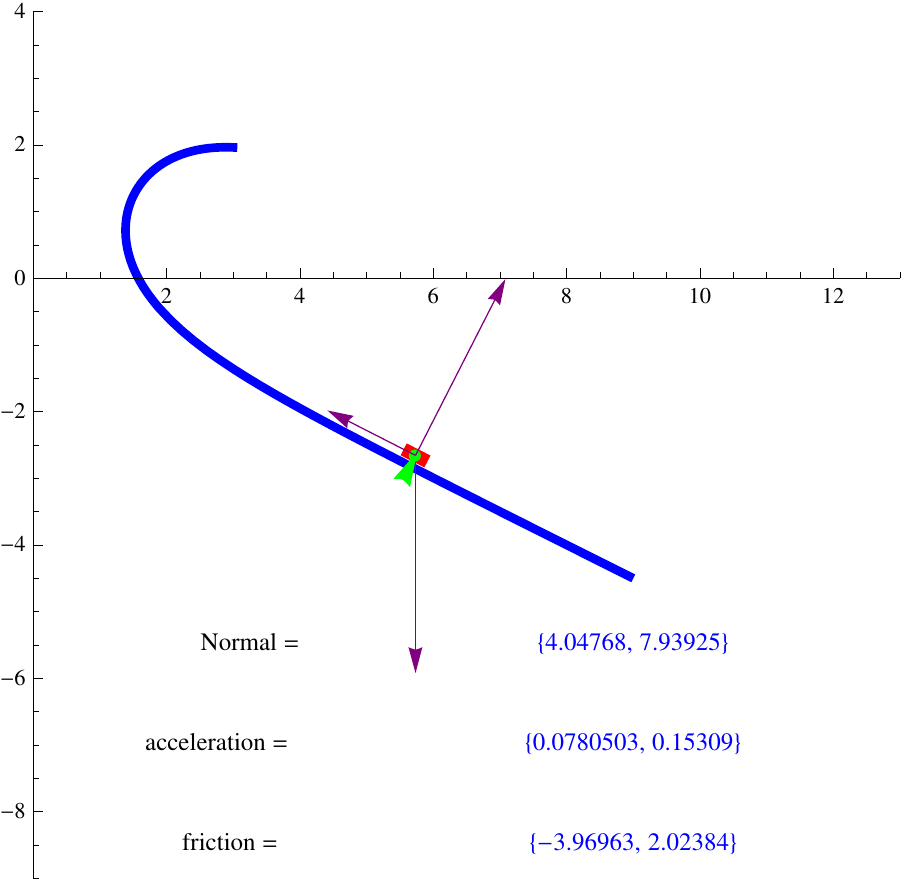}}
\caption{Motion on the longest parts of the ramps}
\label{lg}
\end{figure}

\begin{figure}[ht]
\centerline{\includegraphics[width=4.6cm,height=4.6cm]{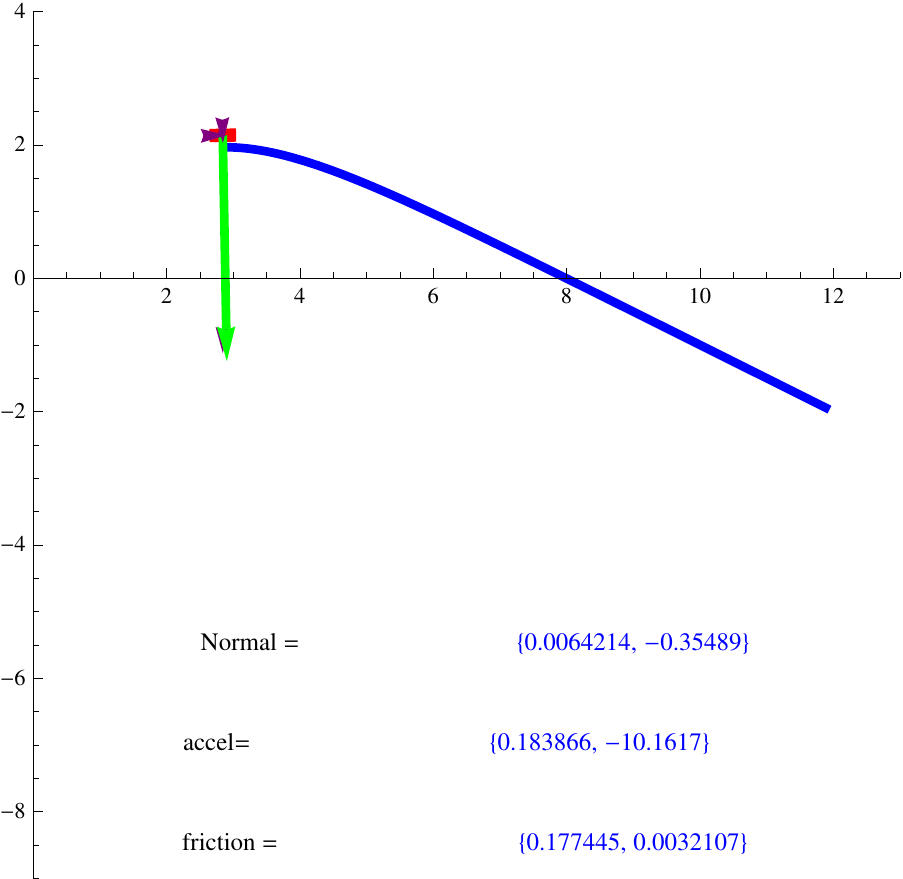}\includegraphics[width=4.6cm,height=4.6cm]{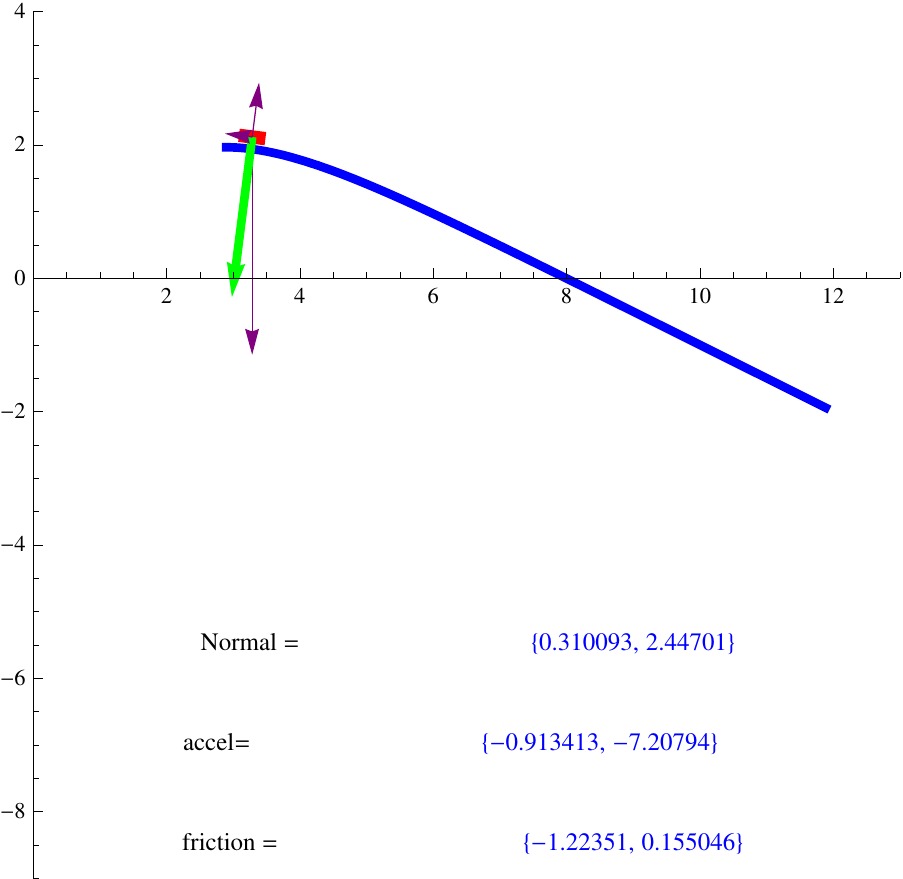}\includegraphics[width=4.6cm,height=4.6cm]{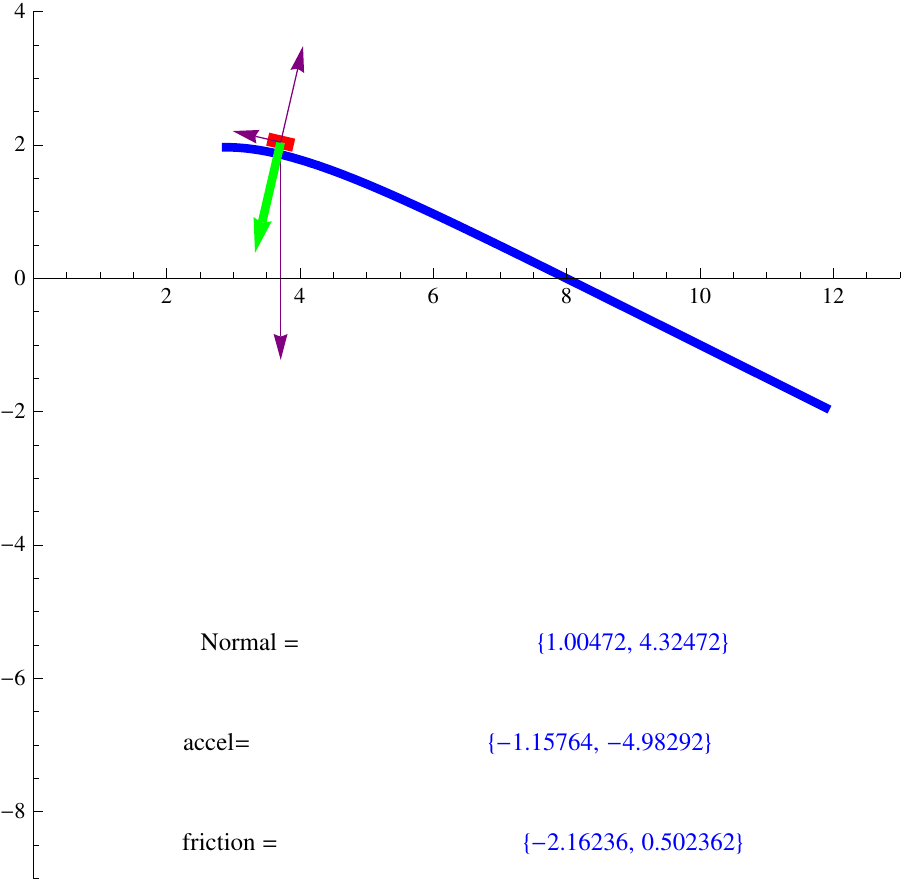}\includegraphics[width=4.6cm,height=4.6cm]{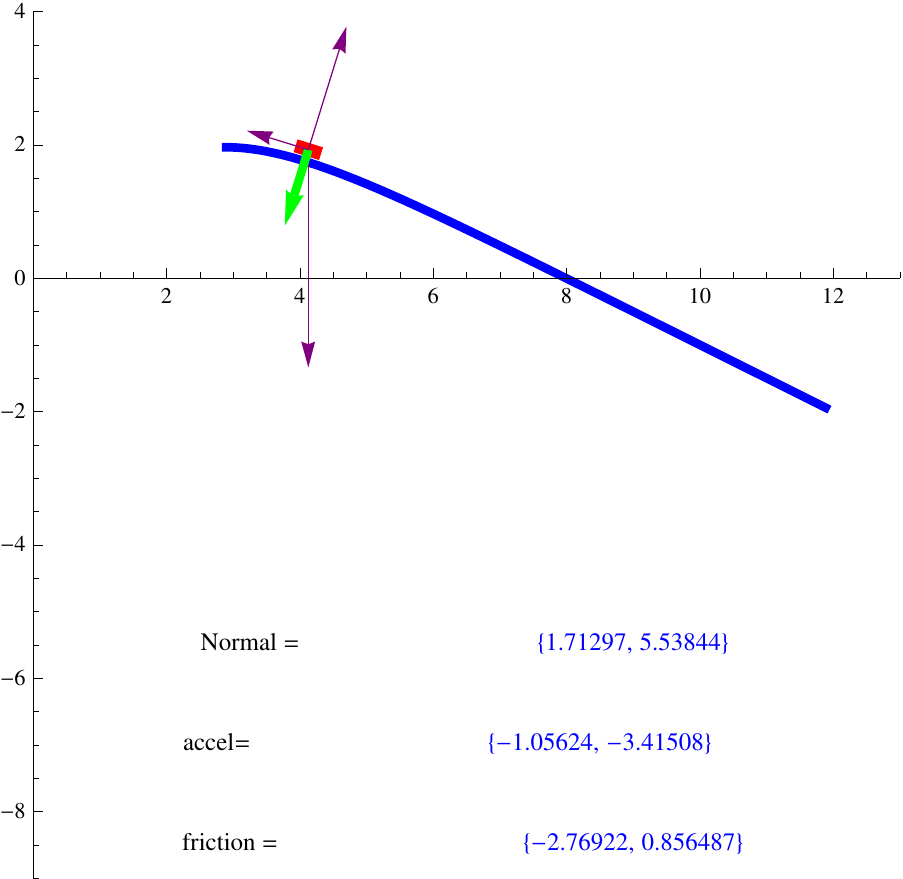}}
\end{figure}
\begin{figure}[ht]
\centerline{\includegraphics[width=4.6cm,height=4.6cm]{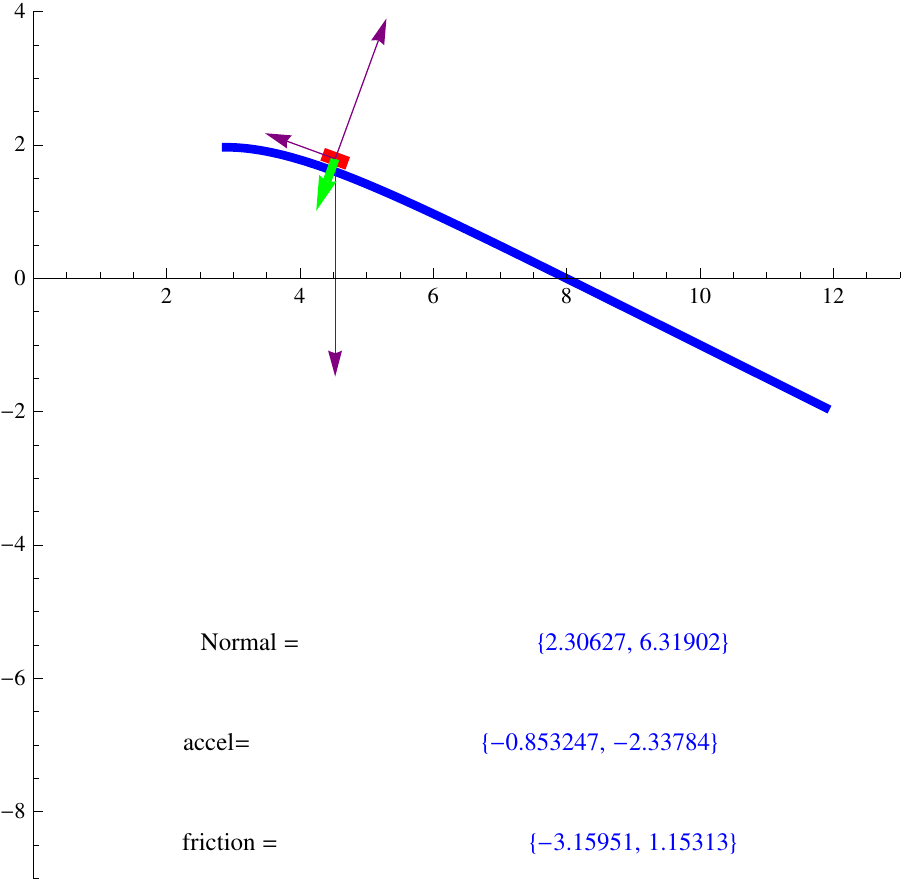}\includegraphics[width=4.6cm,height=4.6cm]{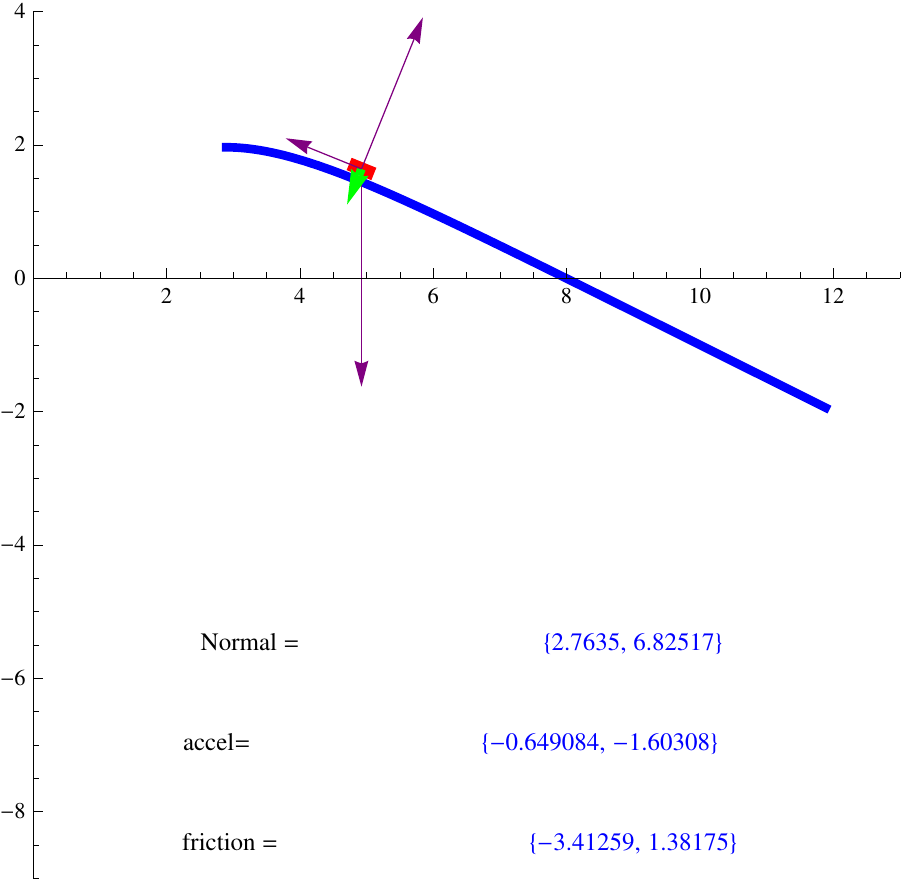}\includegraphics[width=4.6cm,height=4.6cm]{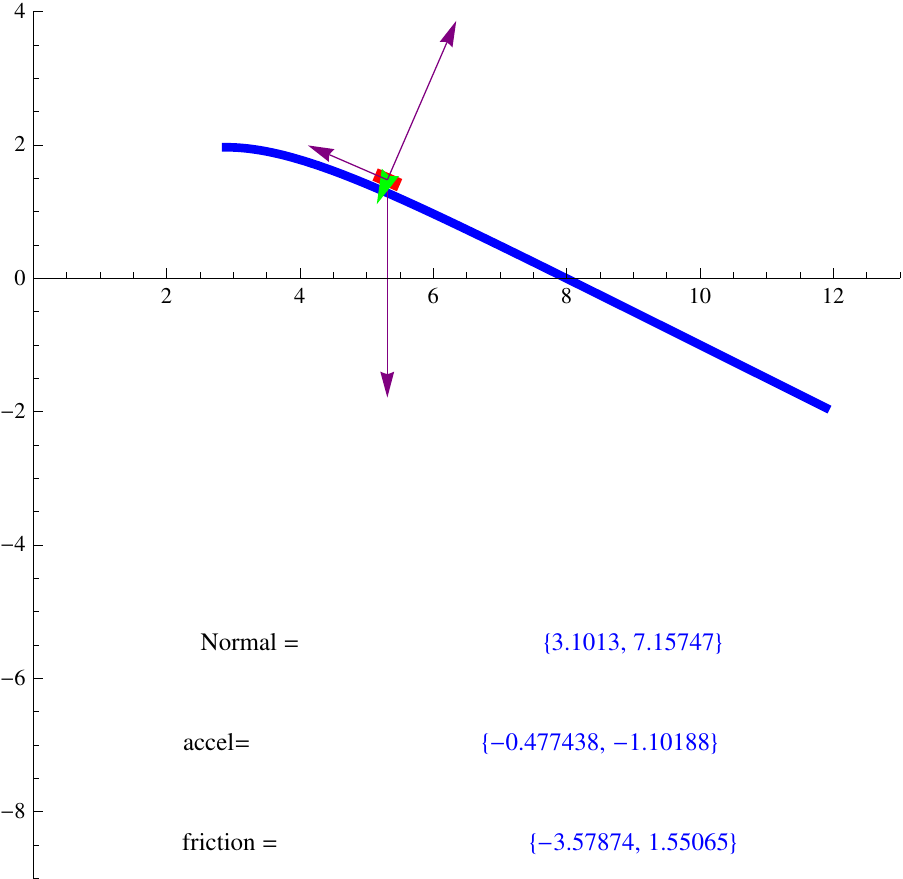}\includegraphics[width=4.6cm,height=4.6cm]{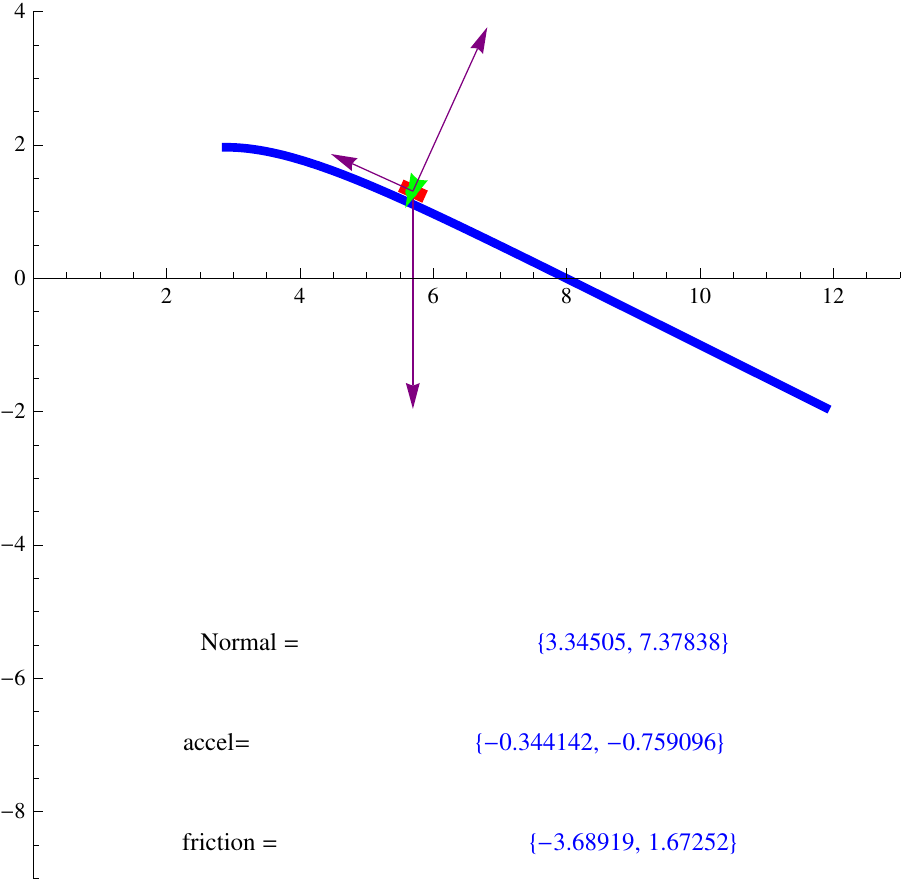}}
\end{figure}
\begin{figure}[ht]
\centerline{\includegraphics[width=4.6cm,height=4.6cm]{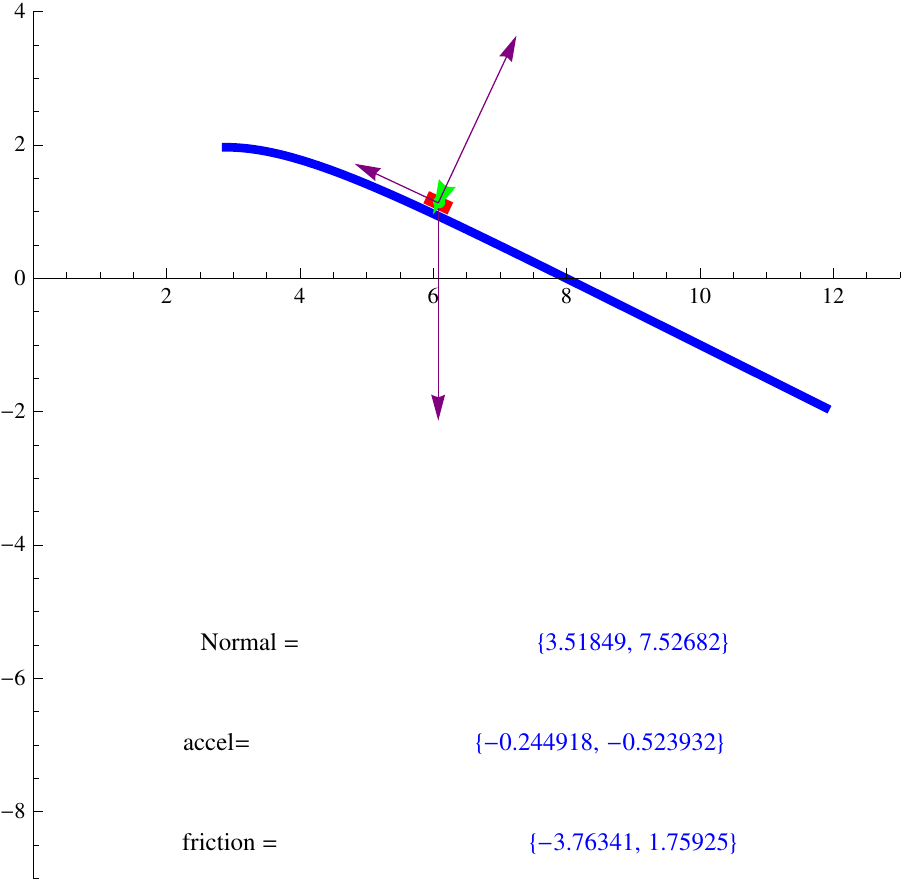}\includegraphics[width=4.6cm,height=4.6cm]{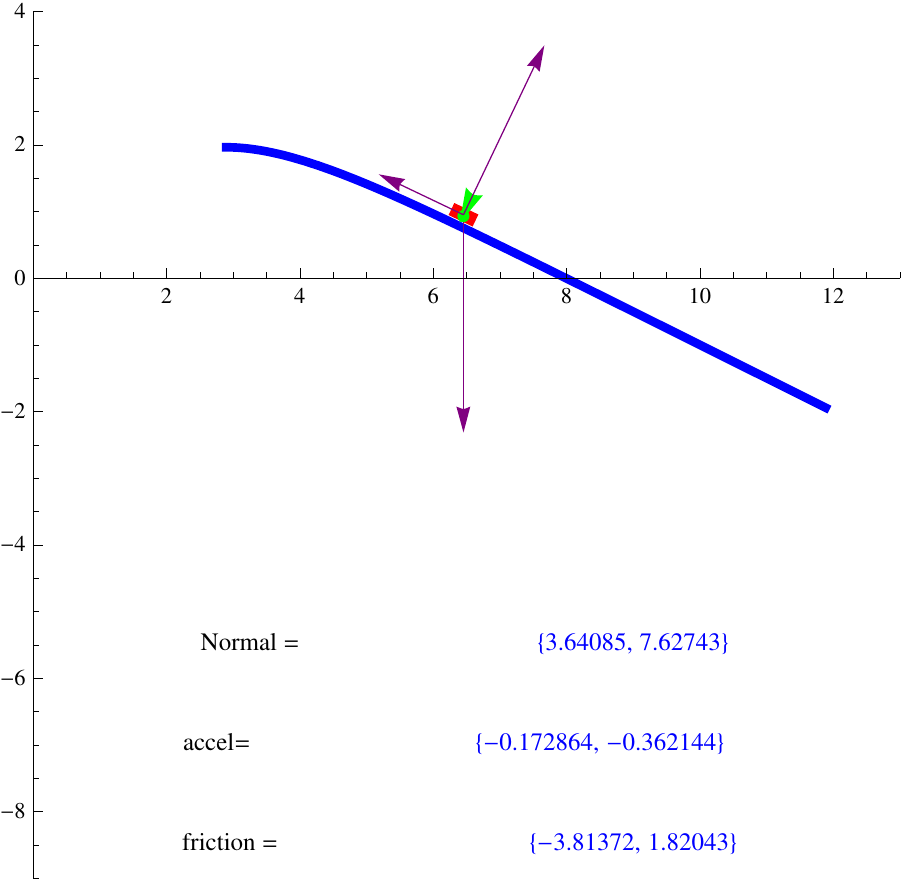}\includegraphics[width=4.6cm,height=4.6cm]{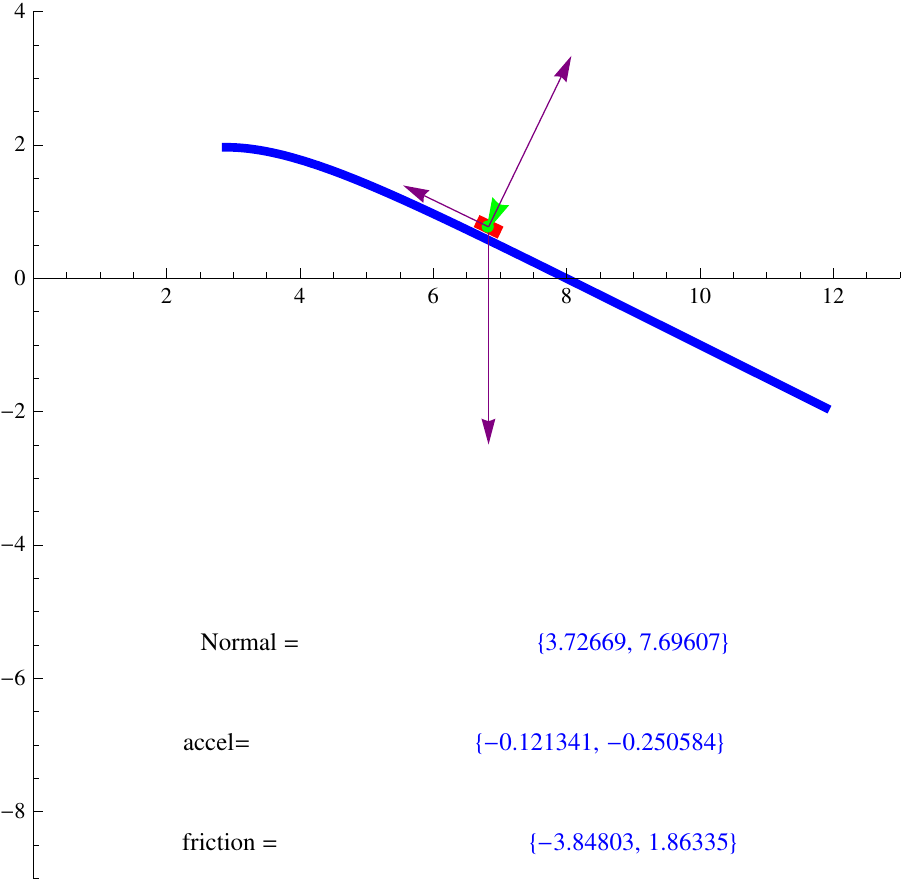}\includegraphics[width=4.6cm,height=4.6cm]{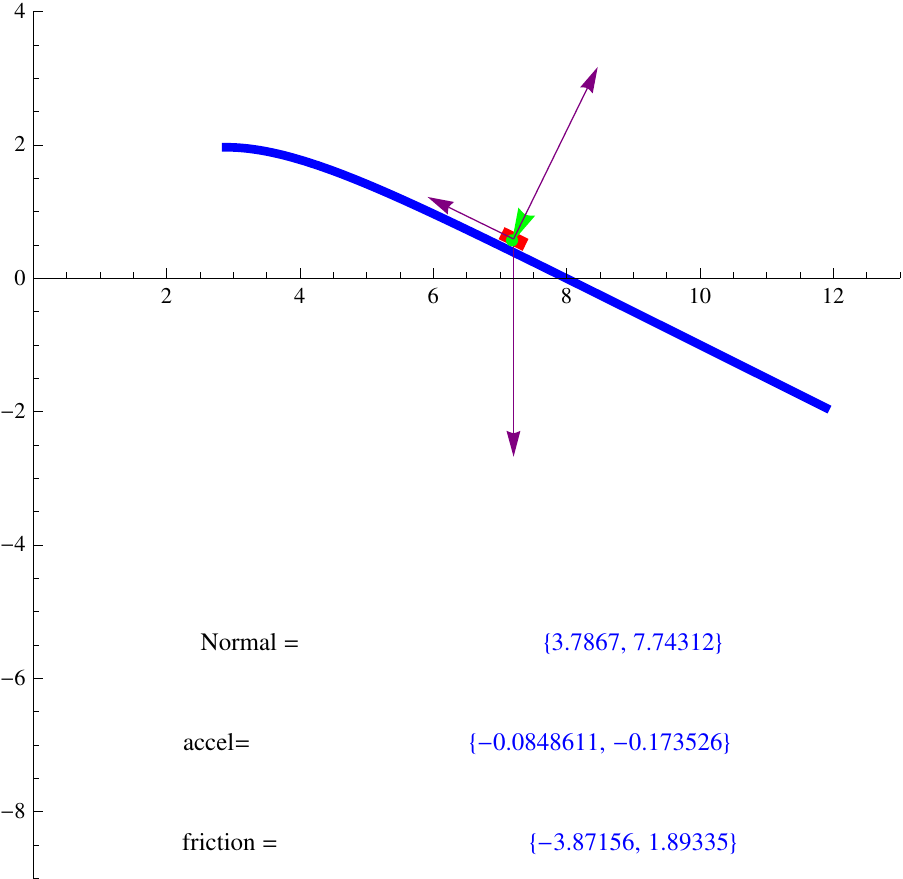}}
\caption{Motion on the shortest parts of the ramps}
\label{sg}
\end{figure}




\end{document}